\title[From particle to kinetic and hydrodynamic descriptions of flocking]{From particle to kinetic and hydrodynamic descriptions of flocking}
\author[Seung-Yeal Ha]{Seung-Yeal Ha}
\address[Seung-Yeal Ha]{\newline Department of Mathematical Sciences\newline Seoul National University, Seoul 151-747, Korea}
\email{syha@snu.ac.kr}
\author[Eitan Tadmor]{Eitan Tadmor}
\address[Eitan Tadmor]{\newline
        Department of Mathematics,  Institute for Physical Science and Technology\newline
        and \newline
	Center of Scientific Computation And Mathematical Modeling (CSCAMM)\newline
        University of Maryland, 
        College Park, MD 20742 USA}
 \email[]{tadmor@cscamm.umd.edu}
\urladdr{http://www.cscamm.umd.edu/\~{}tadmor}
\newtheorem{theorem}{Theorem}[section]
\newtheorem{lemma}{Lemma}[section]
\newtheorem{corollary}{Corollary}[section]
\newtheorem{proposition}{Proposition}[section]
\newtheorem{remark}{Remark}[section]
\newenvironment{remarks}{{\flushleft {\bf Remarks.}}}{}
\newcommand{\bbr}{\mathbb R}
\def\charf {\mbox{{\text 1}\kern-.30em {\text l}}}
\def\Iz{{\mathcal M}_0}    
\def\Jz{J_0} 
\def\fin{f_0} 
\def\Pro{L}
\newcommand{\Qvf}[1]{\Pro[{#1}]}
\newcommand{\Qvfi}[2]{\Pro_{#1}[{#2}]}
\def\Qf{\Pro[f]} 
\def\Col{Q(f,f)} 
\def\EX{X}
\def\EV{V}  
\def\lam{\lambda}  
\def\vareps{\frac{\lam}{N}} 
\def\twovareps{\frac{2\lam}{N}}
\def\rA{A} 
\def\varepsA{\vareps \rA}
\def\ra{A} 
\def\Nvareps{\lam}
\def\Pt{\Omega} 
\def\Rt{\Sigma} 
\def\nablax{\nabla_x}
\def\nablav{\nabla_v}
\def\vxi{v}
\def\vst{v_{*}}
\begin{document}

\date{\today}

\subjclass{92D25,74A25,76N10}
\keywords{flocking,particles,kinetic formulation,moments,hydrodynamic formulation.}

\thanks{\textbf{Acknowledgment.} This research was carried out when S.-Y. Ha was visiting the Department of Mathematics, University of Maryland, College Park, and it is a great pleasure to thank the faculty of applied mathematics group and their hospitality.
E. T. is grateful to Michelle Tadmor for attracting our initial interest to the topic of this research. The work of S.-Y. Ha is partially supported by KOSEF R01-2006-000-10002-0.
The work of E. Tadmor is supported by NSF grants DMS07-07749, NSF FRG grant DMS07-57227 and ONR grant N00014-91-J-1076.}

\begin{abstract}
We discuss the  Cucker-Smale's (C-S) particle model for flocking, deriving precise conditions for flocking to occur when pairwise interactions are sufficiently strong long range. 
We then derive a Vlasov-type kinetic model for the C-S particle model and  prove it 
exhibits
time-asymptotic flocking behavior for arbitrary compactly supported initial data. Finally, we introduce a hydrodynamic description of flocking based on the C-S Vlasov-type kinetic model and prove flocking behavior \emph{without} closure of higher moments.
\end{abstract}
\maketitle
\centerline{\date}

\tableofcontents

%
%
\section{Introduction}\label{sec:intro}
\setcounter{equation}{0}
Collective self-driven motion of self-propelled particles such as flocking of birds and mobile agents,
schooling of fishes, swarming of bacteria,   appears in many context,
e.g., biological organism \cite{Ao, D-M1, D-M2, D-M3, P-V-G, Pa, Sha, T-B, V-C-B-C-S},
mobile network \cite{A-H, C-D-M-B-C, D-C-B-C, He} appears in many contexts of biological system, mobile and human network \cite{C-S1, C-S2}. The flocking dynamics of
self-propelled particles is important to understand the nature of the aforementioned self-propelled particles.
 The terminology {\it "flocking"} represents the phenomenon in which self-propelled individuals using only limited environmental information and simple rules,
organize into an ordered motion (see \cite{T-T}, 
and it was a subject of 
biologists \cite{Ao, Pa}.  The study of flocking mechanism
 based on mathematical models was first started from the work of Viscek et al \cite{V-C-B-C-S}, and was further motivated by the
 hydrodynamic approach \cite{T-T}. 

Our starting point is a \emph{particle description}, proposed recently by Cucker-Smale \cite{C-S1, C-S2}, as a new simple dynamical system to explain the emergency of flocking mechanism with birds, with language evolution in primitive societies etc. The Cucker-Smale's system is different from previous flocking models, e.g., \cite{V-C-B-C-S}, in the sense that the collisionless momentum transfer between particles, $\{(x_i(t),v_i(t))\}_{i=1}^N$, is done through a long-range bi-particle interaction potential, $r(x,y)=r(|x-y|)$ depending on the distance $|x-y|$,
\[
\frac{d}{dt}v_i(t) =
\vareps \sum_{1 \leq j \leq N} r(x_i(t), x_j(t)) \big(v_j(t) - v_i(t)\big).
\]
The Cucker-Smale's flocking system (in short C-S system) is reviewed in Section \ref{sec:CS}. Here we revisit the  formation of flocking in C-S dynamics in terms of the \emph{fluctuations} relative to the center of mass $x_c(t):=1/N\sum x_i(t)$. The dynamics of fluctuations makes  transparent the flocking dynamics. Our main result, summarized in theorem  \ref{thm:flk}, improves \cite{C-S1} for slowly decaying interaction potential,  
$r(|x-y|) \sim |x-y|^{-2\beta},  2\beta\leq 1$. It is shown that 
flocking emerges in the sense that the following two main features occur: 
(i) the diameter $\max|x_i(t)-x_j(t)|$ remains uniformly bounded thus defining the ``flock",  
and (ii) the ``flock" is traveling with a bulk mean velocity which is asymptotically particle-independent, $v_i(t) \approx v_c:=1/N\sum v_i(0)$.

When the number of particles is sufficiently large, it is not economical to keep track of the motion of each particle through the Cucker-Smale's system. Instead, one is forced to study the mean field limit of C-S system and we introduce a \emph{kinetic description} for flocking, in analogy with the Vlasov equation in plasma and astrophysics.
In Section \ref{sec:kinetic}, we present a Vlasov type mean field model, which is derived from the C-S system using the BBGKY hierarchy in statistical mechanics.
 The formal derivation, carried in Section \ref{sec:kinetic}, follows by taking the limit of an 
$N$-particle interacting system consisting of self-propelled particles governed by C-S flocking dynamics. 
To this end, let $f =f(x,v,t)$ denote the one-particle distribution function of such particles positioned at $(x,t) \in \bbr^d\times \bbr_+$ with
 a velocity $v \in \bbr^d$: the dynamics of the distribution function $f$ is determined by 
\[
 \partial_t f + v\cdot\nabla_x f + \lambda \nablav\cdot\Col  = 0,
\]
where $\lambda$ is a positive constant, and $\Col$ is the interaction term
\[
\Col (x,v,t) := \int_{\bbr^{2d}} r(x,y) (\vst - v) f(x,v,t)f(y,\vst,t) d\vst dy,
\]
dictated by a prescribed interparticle interaction kernel, $r=r(x,y)$.
We refer to  Degond and Motsch   \cite{D-M1, D-M2, D-M3} for  recent kinetic description of  Vicsek type model of flocking.
The dynamics of particle trajectories of the proposed kinetic description of flocking is analyzed in Section \ref{subsec:kinetic-trajec}; in Section \ref{subsec:kinetic-global} we prove the global existence of smooth solutions to the kinetic model with arbitrary smooth compactly supported initial data.
In Section \ref{sec:kinetic-time}, we show that the kinetic model reveals the time-asymptotic flocking behavior when the bounded interparticle interaction rate has a sufficiently strong long range. Our results are summarized in the main theorem \ref{thm:asy}, proving the decay of energy fluctuations, $\Lambda[f](t)$, around the mean bulk velocity, $u_c$,
\[
\Lambda[f](t):= \int_{\bbr^{2d}}|v-u_c|^2f(x,v,t)dvdx,
\qquad u_c(t)= \frac{\int_{\bbr^{2d}}vf(x,v,t)dvdx}{\int_{\bbr^{2d}} f(x,v,t)dvdx}\equiv u_c(0).
\] 
Flocking is proved for the restricted range, $2\beta <1/2$, realized by the  asymptotic decay estimate, $\Lambda[f](t) \rightarrow 0$ as $t\rightarrow \infty$.

In Section \ref{sec:hydro} we turn our attention to the \emph{hydrodynamic description} of flocking, furnished by moments of the kinetic distribution function. We study  the dynamics of the resulting system of balanced laws related to the moments of Vlasov model. Despite the lack of closure, we present a fundamental estimate  which enables to conclude the flocking mechanism at the macroscopic  hydrodynamic scales. Theorem \ref{thm:fundamental} states that the energy-related functional, $\Gamma(t)$
\[
\displaystyle {\Gamma}(t)  := \int_{\bbr^{2d}}\Big(\frac{1}{2}|u(x) - u(y)|^2 + e(x) + e(y) \Big) \rho(x) \rho(y) dy dx.
\]
decays provided the interparticle interaction, $\varphi(s)=\inf r(x(s),y(s))$ decays \emph{slowly} enough so that its primitive, $\Phi(t)$, diverges. This in turn in related to the
\emph{increase} of entropy
\[
\frac{d}{dt}\int_{\bbr^{2d}} f\log(f) dx dv \geq 0,
\]
as particles with increasingly highly correlated velocities flock towards particle-independent bulk velocity. 
%
%
\section{A particle description of flocking}\label{sec:CS}
\setcounter{equation}{0} 
\subsection{The  Cucker-Smale model}
In this section, we briefly review the Cucker-Smale's flocking system in \cite{C-S1, C-S2, She}, which
manifests the time-asymptotic flocking behavior of many particle systems.
We reinterpret the C-S system in terms of \emph{fluctuations} relative to the center of mass coordinates, which enables us to simplify and sharpen the derivation of sufficient conditions for flocking to occur.

Consider an $N$-particle interacting system consisting of identical
particles with mass $m$ to be assumed to be unity.  Let $[x_i(t),
v_i(t)] \in \bbr^{2d}$ be the phase space position of an
$i$-particle.  
The Cucker-Smale dynamical system \cite{C-S1, C-S2} takes the form
\begin{equation} \label{B1}
\displaystyle \frac{d}{dt}x_i(t) = v_i(t), \qquad \frac{d}{dt}v_i(t) =
\vareps \sum_{1 \leq j \leq N} r(x_i(t), x_j(t)) \big(v_j(t) - v_i(t)\big).
\end{equation}
Here $\lam$ is a positive constant, and $r(x,y)$ is a symmetric, bi-particle \emph{interaction kernel},
\begin{equation}\label{Br}
r(x,y) = r(y,x)\leq \rA.
\end{equation}
\begin{subequations}\label{eqs:Bd}
To discuss the time asymptotic flocking behavior, we will restrict our attention to interparticle interactions which are decreasing functions of the distance\footnote{To emphasize this point, we therefore continue to refer to general symmetric kernels, $r(x,y)=r(y,x)$, whenever translation invariance is not necessary.}, 
\begin{equation}\label{Bd}
r(x,y)=r(|x-y|), \qquad r(\cdot) \ \ \mathrm{is decreasing}.
\end{equation} 
A prototype example is the interaction kernel with a polynomial decay of order $2\beta$, \cite{C-S1},
\begin{equation} \label{B2}
 \displaystyle r(|x-y|) \geq  \frac{\ra}{(1 + |x-y|^2)^{\beta}}, \qquad \beta \geq 0.
\end{equation}
We note in passing that  only a lower-bound of the interaction kernel matters. 
\end{subequations}

For notational simplicity  we often omit $t$-dependence from the 
particle identification, abbreviating $x_i \equiv x_i(t)$ and $v_i \equiv v_i(t)$. 

Let $m_j(t), \ j=0,1,2$ denote the moments.
\[ 
\displaystyle  m_0 := \sum_{i=1}^{N} 1 = N, \qquad   m_1(t) := \sum_{i=1}^{N} v_i(t), \qquad   m_2(t) := \sum_{i=1}^{N} |v_i(t)|^2. 
\]
Regarding the dynamics of these moments, we have the following estimate.

\begin{proposition}\label{prop:xyz}
Let $(x_i(t), v_i(t))$ be the solution to the C-S system \eqref{B1},\eqref{Br}. 
Then the following estimates hold.
\begin{subequations}\label{eqs:B5}
\begin{eqnarray}
&& \frac{d}{dt} m_1(t) =0. \label{eq:B5a} \\ 
&& \frac{d}{dt} m_2(t)= -\vareps
\sum_{1 \leq i, j \leq N} r(x_i, x_j) |v_j - v_i|^2. \label{eq:B5b} \\
&& m_2(t) \geq m_2(0)
e^{-2\Nvareps \rA t} + \frac{|m_1(0)|^2}{m_0} \Big( 1 - e^{-2\Nvareps \rA
t}\Big). \label{eq:B5c}
\end{eqnarray}
\end{subequations}
\end{proposition}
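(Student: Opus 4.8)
The plan is to establish the three estimates \eqref{eq:B5a}--\eqref{eq:B5c} in order, since each builds on the previous one. For \eqref{eq:B5a} I would simply differentiate $m_1$ and substitute the velocity equation from \eqref{B1}, obtaining
\[
\frac{d}{dt}m_1(t) = \vareps\sum_{1\le i,j\le N} r(x_i,x_j)(v_j-v_i).
\]
The key observation is that the summand is antisymmetric under the exchange $i\leftrightarrow j$: because the kernel is symmetric, $r(x_i,x_j)=r(x_j,x_i)$ by \eqref{Br}, relabelling the indices sends the double sum to its own negative, so it must vanish. Hence total momentum is conserved, $m_1(t)\equiv m_1(0)$.

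For \eqref{eq:B5b} I would differentiate $m_2=\sum_i|v_i|^2$ and again insert \eqref{B1} to get
\[
\frac{d}{dt}m_2(t) = \twovareps\sum_{1\le i,j\le N} r(x_i,x_j)\, v_i\cdot(v_j-v_i).
\]
Here the summand is \emph{not} antisymmetric, so instead I would symmetrize: averaging the expression with its image under $i\leftrightarrow j$ and using $r(x_i,x_j)=r(x_j,x_i)$ replaces $v_i\cdot(v_j-v_i)$ by $\tfrac12\big(v_i\cdot(v_j-v_i)+v_j\cdot(v_i-v_j)\big)=-\tfrac12|v_i-v_j|^2$. This yields exactly the stated dissipation identity, which also makes manifest that $m_2$ is nonincreasing.

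The substantive step is the lower bound \eqref{eq:B5c}. Starting from \eqref{eq:B5b}, I would bound the kernel from above by $r(x_i,x_j)\le A$ as in \eqref{Br}, which --- since every term $|v_i-v_j|^2$ is nonnegative --- gives the differential inequality
\[
\frac{d}{dt}m_2(t) \ge -\vareps A\sum_{1\le i,j\le N}|v_i-v_j|^2.
\]
The crucial algebraic identity is $\sum_{i,j}|v_i-v_j|^2 = 2N m_2(t) - 2|m_1(t)|^2$, obtained by expanding the square and recognizing $\sum_{i,j}v_i\cdot v_j=|m_1|^2$. Combining this with the conservation law \eqref{eq:B5a}, $|m_1(t)|^2=|m_1(0)|^2$, together with $m_0=N$, turns the inequality into the closed scalar form
\[
\frac{d}{dt}m_2(t) \ge -2\lam A\, m_2(t) + 2\lam A\,\frac{|m_1(0)|^2}{m_0}.
\]
I would then integrate this linear differential inequality with the integrating factor $e^{2\lam A t}$ (a Gronwall-type argument), arriving precisely at \eqref{eq:B5c}.

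The only place demanding genuine care is this last step: one must use the conservation of $m_1$ to freeze $|m_1|^2$ as a constant in time --- otherwise the right-hand side would not decouple into an autonomous scalar ODE --- and one must keep the inequality oriented correctly through the integration so that the lower bound is preserved. The symmetrization manipulations behind \eqref{eq:B5a}--\eqref{eq:B5b}, by contrast, are routine once the antisymmetry/symmetrization structure is spotted.
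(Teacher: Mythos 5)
Your proposal is correct and follows essentially the same route as the paper: antisymmetry under the index exchange $i\leftrightarrow j$ for momentum conservation, symmetrization of the kernel to produce the dissipation identity \eqref{eq:B5b}, and then the bound $r\leq A$ combined with the identity $\sum_{i,j}|v_i-v_j|^2 = 2Nm_2 - 2|m_1|^2$ and Gronwall's lemma for \eqref{eq:B5c}. All the algebraic steps and inequality orientations check out, so there is nothing to add.
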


\begin{remark}\label{rem:xyz} Proposition \ref{prop:xyz} tells us that although the kinetic energy $m_2(t)$ is monotonically decreasing, (\ref{eq:B5b}), it has the following nonzero lower bound if initial momentum $m_1(0) \not = 0$,
\begin{equation}\label{eq:CSinq}
m_2(t) \geq \frac{|m_1(0)|^2}{m_0}.
\end{equation}
Flocking occurs when equality takes place in the Cauchy-Schwarz inequality (\ref{eq:CSinq}).
\end{remark}

\begin{proof}
Conservation of momentum in \eqref{eq:B5a} follows from the symmetry $r(x_i, x_j) = r(x_j, x_i)$, for  
\[ 
\displaystyle \frac{d}{dt} \Big( \sum_{i=1}^{N} v_i \Big) = \vareps \sum_{1 \leq i, j \leq N}r(x_i, x_j) (v_j - v_i) = 0. 
\]
Moreover, symmetry also implies
\[
\displaystyle \sum_{1 \leq i, j \leq N} r(x_i, x_j) v_i \cdot (v_i - v_j) 
= -\sum_{1 \leq i, j \leq N} r(x_i, x_j) v_j \cdot (v_i - v_j)
\] 
and hence the energy dissipation \eqref{eq:B5b} follows
\begin{eqnarray*}
\displaystyle  \frac{d}{dt} \Big( \sum_{i=1}^{N} |v_i|^2 \Big) &=&
-\twovareps
 \sum_{1 \leq i, j \leq N} r(x_i,  x_j) v_i \cdot (v_i - v_j) \\
 &=&  \twovareps \sum_{1 \leq i, j \leq N} r(x_i,  x_j) v_j \cdot (v_i - v_j) 
 = -\vareps \sum_{1 \leq i, j \leq N}  r(x_i,  x_j) |v_i - v_j|^2. 
\end{eqnarray*}

Finally, to prove \eqref{eq:B5c}, we use the energy dissipation in \eqref{eq:B5b}, the fact that $r(x_i, x_j) \leq \rA$ and the conservation of momentum in \eqref{eq:B5a} to find
\begin{align*}
\begin{aligned}
\displaystyle \frac{d}{dt} m_2(t)
 &= -\vareps \sum_{1 \leq i, j \leq N} r(x_i, x_j) |v_i - v_j|^2
  \geq -\varepsA \sum_{1 \leq i, j \leq N}  |v_i - v_j|^2   \\ 
 &= -2\Nvareps\rA  \Big( m_2(t)-\frac{|m_1(t)|^2}{N} \Big)= 
-2\Nvareps \rA  \Big( m_2(t)-\frac{|m_1(0)|^2}{m_0} \Big).
\end{aligned}
\end{align*}
Gronwall's lemma yields \eqref{eq:B5c}.
\end{proof}

\subsection{Asymptotic behavior of fluctuations --- flocking}
We now turn to study the asymptotic time behavior of solutions to C-S system \eqref{B1},\eqref{Bd}. To this end, we introduce a center of mass system $(x_c(t), v_c(t))$,
\[ 
\displaystyle x_c(t) := \frac{1}{N} \sum_{i=1}^{N} x_i(t), \qquad v_c(t) := \frac{1}{N} \sum_{i=1}^{N} v_i(t). 
\]
Then, thanks to conservation of momentum, the velocity $v_c$ is constant in $t$, and the
trajectory of center of mass $x_c$ is  a straight line:
\[  
v_c(t) = v_c(0), \qquad  x_c(t) = x_c(0) + t v_c(0). 
\]
Observe that the fluctuations around the center of mass, 
\[
x_i(t) \mapsto x_i(t)-x_c(t), \qquad v_i(t) \mapsto v_i(t)-v_c(t),
\]
satisfy the same C-S system \eqref{B1},\eqref{Bd}: it is here that we take into account  the fact that the interparticle kernel depends on the distance, $r(x,y)=r(|x-y|)$. We shall show that under appropriate conditions, flocking occurs in the sense that these \emph{fluctuations} decay in time. Thus, the time-asymptotic dynamics of C-S solutions emerges as a linear movement with a fixed velocity dictated by the coordinates of  center of mass.
    
To proceed, we introduce the two auxiliary functions which measure the fluctuations of the fluctuations around their center of mass,
\begin{eqnarray*}
{\EX}(t) := \sum_{1\leq i \leq N} |x_i(t)
- x_c(t)|^2, \qquad 
{\EV}(t) := \sum_{1\leq i, j \leq N} |v_i(t) - v_c(0)|^2,
\end{eqnarray*}
subject to initial conditions $(\EX_0,\EV_0)=(\EX(0),\EV(0))$.
The flocking behavior will depend in an essential way on the behavior of the minimal value of the interparticle interaction at time $t$,
\begin{equation}
\varphi(t) := \min_{1\leq i,j \leq N} r(x_i(t),x_j(t)).
\end{equation}
We begin with the fluctuations of velocities.
\begin{lemma}\label{lem:2-1}[Fluctuations of velocities].
Let $(x_i(t), v_i(t))$ be the solution of the system \eqref{B1},\eqref{Bd}. Then we have
\[
\displaystyle  {\EV}(t) \leq  {\EV}_0 e^{-2 \Nvareps
 \Phi(t) },  \ \ \Phi(t):=\int_0^t \varphi(\tau) d\tau.
\]
\end{lemma}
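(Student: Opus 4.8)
The plan is to derive a closed linear differential inequality for $\EV(t)$ and then apply Gronwall's lemma, in the same spirit as the proof of \eqref{eq:B5c}. The starting observation is that, because $v_c$ is conserved, $\EV(t)$ is simply a rescaled velocity variance: writing $m_1=Nv_c$ one has
\[
\EV(t)=\sum_{1\le i,j\le N}|v_i-v_c(0)|^2 = N\sum_{i=1}^N|v_i-v_c|^2 = N\,m_2(t)-|m_1(0)|^2.
\]
Hence $\frac{d}{dt}\EV(t)=N\frac{d}{dt}m_2(t)$, and the evolution of $\EV$ is already encoded in the energy-dissipation identity \eqref{eq:B5b}.

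First I would differentiate, using \eqref{eq:B5b} together with $N\vareps=\Nvareps$, to get
\[
\frac{d}{dt}\EV(t) = N\frac{d}{dt}m_2(t) = -\Nvareps\sum_{1\le i,j\le N} r(x_i,x_j)\,|v_i-v_j|^2.
\]
The decisive algebraic step is the identity
\[
\sum_{1\le i,j\le N}|v_i-v_j|^2 = 2\big(N\,m_2(t)-|m_1(0)|^2\big)=2\,\EV(t),
\]
which expresses the unweighted pairwise dissipation as a multiple of $\EV$ itself and is precisely what allows the estimate to close on a single scalar quantity.

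With this identity in hand, I would bound the weighted dissipation from below by extracting the minimal interaction rate: since $r(x_i,x_j)\ge\varphi(t)$ for every pair, by the very definition $\varphi(t)=\min_{i,j}r(x_i(t),x_j(t))$, and since each $|v_i-v_j|^2\ge 0$,
\[
\sum_{1\le i,j\le N} r(x_i,x_j)\,|v_i-v_j|^2 \ge \varphi(t)\sum_{1\le i,j\le N}|v_i-v_j|^2 = 2\,\varphi(t)\,\EV(t).
\]
Substituting into the previous display yields the linear differential inequality $\frac{d}{dt}\EV(t)\le -2\Nvareps\,\varphi(t)\,\EV(t)$, and Gronwall's lemma gives $\EV(t)\le\EV_0\,e^{-2\Nvareps\Phi(t)}$ with $\Phi(t)=\int_0^t\varphi(\tau)\,d\tau$, which is the assertion.

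The computation is short and essentially routine; the only point that requires care is the closure, namely recognizing that the quadratic dissipation functional collapses onto $\EV$ through the variance identity, so that replacing each pair weight $r(x_i,x_j)$ by its common lower bound $\varphi(t)$ converts a mere sign condition into a genuine linear Gronwall inequality rather than a one-sided estimate that fails to feed back on $\EV$. I note in passing that the monotonicity and distance-dependence hypothesis \eqref{Bd} plays no role in this lemma beyond giving $\varphi$ its meaning; it enters only later, when $\varphi(t)$ must itself be bounded from below in terms of the spatial spread $\EX(t)$ in order to render the exponent $\Phi(t)$ quantitatively useful.
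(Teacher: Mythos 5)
Your proof is correct and follows essentially the same route as the paper: both rest on the dissipation identity \eqref{eq:B5b}, the variance identity $\sum_{i,j}|v_i-v_j|^2=2\bigl(Nm_2-|m_1|^2\bigr)$, the pointwise bound $r(x_i,x_j)\geq\varphi(t)$, and Gronwall's lemma. The only (immaterial) difference is bookkeeping: the paper applies \eqref{eq:B5b} to the shifted velocities $v_i-v_c$, while you keep the original variables and use momentum conservation to write $\EV$ as $Nm_2(t)-|m_1(0)|^2$; the two are equivalent, and the resulting differential inequality is the same.
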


\begin{proof}
We invoke  \eqref{eq:B5b} with $v_i(t)-v_c(t)$ replacing $v_i(t)$ to find that
\begin{equation}\label{eq:2.4ca}
\frac{d}{dt} \sum_{i=1}^{N} |v_i- v_c|^2  =  -  \vareps\sum_{1\leq i, j \leq N} r(x_i, x_j) |v_j - v_i|^2.
\end{equation}
Since $\sum_{i} (v_i - v_c) = 0$, we have 
$\sum_{1\leq i, j \leq N} |v_i - v_j|^2 = 2 N \EV(t)$, 
and the result follows from Gronwall's integration of 
\begin{eqnarray*}
\displaystyle  \frac{d}{dt} \EV(t)  =  -  \vareps\sum_{1\leq i, j \leq N} r(x_i, x_j) |v_j - v_i|^2 \leq - 2 \Nvareps \varphi(t)  \EV(t).
\end{eqnarray*}
\end{proof}

\begin{remark}\label{rem:2-1}
Lemma \ref{lem:2-1} implies the sufficient condition for flocking is that the interparticle interaction potential decays \emph{sufficiently slow}, so that its primitive, $\Phi(t)$,  diverges, i.e.,  
\begin{equation}\label{eq:divergence}
\displaystyle  {\rm if } \ \  \lim_{t\rightarrow \infty}\Phi(t)\equiv \int_0^{t} \varphi(\tau) d\tau = \infty  \ \ \ {\rm then} \ \   \lim_{t \to \infty} |v_i(t) - v_c| = 0, \qquad i =1, \cdots, N. 
\end{equation}
\end{remark}
The  answer whether $\varphi(t)$ decays sufficiently slow to enforce flocking depends on the variance of positions $x_i(t)$.
\begin{lemma}\label{lem:2-2}[Fluctuations of positions].
Let $(x_i(t), v_i(t))$ be the solution of the system \eqref{B1},\eqref{Bd}. Then we have
\[
\displaystyle {\EX}(t) \leq  2 {\EX}_0 +
{\EV}_0\frac{ t^2}{2}, \qquad t \geq 0.
\]
\end{lemma}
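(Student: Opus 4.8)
The plan is to reduce the position bound to a purely kinematic estimate and then feed in the velocity decay already established in Lemma \ref{lem:2-1}. First I would pass to the fluctuation variables $\hat x_i(t):=x_i(t)-x_c(t)$ and $\hat v_i(t):=v_i(t)-v_c(0)$. Because momentum is conserved we have $v_c(t)\equiv v_c(0)$ and $\dot x_c=v_c$, so the interaction term drops out of the position equation entirely and one is left with the elementary relation $\frac{d}{dt}\hat x_i = v_i-v_c=\hat v_i$. Thus $\EX(t)=\sum_i|\hat x_i(t)|^2$ is driven only by the velocity fluctuations $\sum_i|\hat v_i(t)|^2$, which is precisely the quantity whose monotone decay is the content of Lemma \ref{lem:2-1}.

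Next I would integrate $\hat x_i(t)=\hat x_i(0)+\int_0^t\hat v_i(s)\,ds$ and estimate termwise. Applying $(a+b)^2\le 2a^2+2b^2$ and then Cauchy--Schwarz in time,
\[
|\hat x_i(t)|^2 \le 2|\hat x_i(0)|^2 + 2\Big|\int_0^t\hat v_i(s)\,ds\Big|^2 \le 2|\hat x_i(0)|^2 + 2t\int_0^t|\hat v_i(s)|^2\,ds.
\]
Summing over $i$ gives $\EX(t)\le 2\EX_0 + 2t\int_0^t\big(\sum_i|\hat v_i(s)|^2\big)\,ds$. The key step is to invoke Lemma \ref{lem:2-1}, which yields the monotone bound $\sum_i|\hat v_i(s)|^2\le \EV_0 e^{-2\Nvareps\Phi(s)}\le \EV_0$ (up to the normalization in the definition of $\EV$, and using $\Phi\ge 0$); hence $\int_0^t(\cdots)\,ds\le \EV_0\, t$ and one arrives at a bound of the advertised form $\EX(t)\le 2\EX_0 + c\,\EV_0 t^2$. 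An equivalent and slightly sharper route differentiates $\EX$ directly, $\frac{d}{dt}\EX=2\sum_i\hat x_i\cdot\hat v_i\le 2\sqrt{\EX}\,\big(\sum_i|\hat v_i|^2\big)^{1/2}$, reduces it to $\frac{d}{dt}\sqrt{\EX}\le\sqrt{\EV_0}$ after inserting Lemma \ref{lem:2-1}, and integrates to $\sqrt{\EX(t)}\le\sqrt{\EX_0}+\sqrt{\EV_0}\,t$ before squaring with Young's inequality.

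The hard part is conceptual rather than computational: individual speeds $|\hat v_i(s)|$ need not decrease in time, since Lemma \ref{lem:2-1} controls only the aggregate fluctuation energy $\sum_i|v_i(s)-v_c|^2$ and not each trajectory separately. Consequently one cannot track particles one at a time, and the whole estimate must be organized at the level of the total energy, keeping the two Cauchy--Schwarz applications --- one over the particle index $i$, one over the time variable $s$ --- cleanly separated. The only remaining delicacy is the bookkeeping of the multiplicative constants, which depends on the normalization adopted for $\EV$ and on whether one uses the sharp bound $(\sqrt{\EX_0}+\sqrt{\EV_0}\,t)^2$ or its Young-regularized form $2\EX_0+c\,\EV_0t^2$.
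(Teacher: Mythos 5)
Your proof is correct in substance, and your primary route is genuinely different from the paper's. The paper proceeds exactly along your ``alternative'' second route: it differentiates $\EX(t)$, applies Cauchy--Schwarz over the particle index to get $\frac{d}{dt}\EX \leq \sqrt{\EX}\,\sqrt{\EV}$, inserts Lemma \ref{lem:2-1}, and integrates the resulting differential inequality for $\sqrt{\EX}$. Your main argument instead integrates the trajectory identity $\hat x_i(t)=\hat x_i(0)+\int_0^t \hat v_i(s)\,ds$ and applies Cauchy--Schwarz in the \emph{time} variable; this is more elementary (no differential inequality needed) and makes transparent that only the aggregate bound $\EV(s)\leq \EV_0$ enters, never pointwise control of individual speeds. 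What the paper's route buys is that it retains the factor $\int_0^t e^{-\Nvareps\Phi(\tau)}d\tau$ before crudely bounding it by $t$ (see \eqref{eq:rough}); that retained factor is precisely what gets bootstrapped into the uniform-in-time bound on positions in Step 2 of Theorem \ref{thm:flk}. Your time-sliced Cauchy--Schwarz discards this refinement, though you could reinstate it by keeping $\EV(s)\leq \EV_0 e^{-2\Nvareps\Phi(s)}$ under the time integral rather than bounding it by $\EV_0$ at once. One point of substance on constants: both of your routes give $\EX(t)\leq 2\EX_0+2\EV_0 t^2$, or sharply $\bigl(\sqrt{\EX_0}+\sqrt{\EV_0}\,t\bigr)^2$, not the stated $2\EX_0+\tfrac{1}{2}\EV_0 t^2$. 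This is not a defect of your argument: the paper's own computation drops the factor $2$ in $\frac{d}{dt}|x_i-x_c|^2=2(x_i-x_c)\cdot(v_i-v_c)$, and the constant $\tfrac{1}{2}$ cannot hold in general (take $\EX_0=0$ and $t$ small, where $\EX(t)\approx \EV_0 t^2$). Since only the quadratic growth $\EX(t)\lesssim \EX_0+\EV_0 t^2$ is used in Corollary \ref{cor:2-1} and \eqref{B-5}, the discrepancy is harmless for everything downstream.
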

\begin{proof}
We use Cauchy-Schwartz's inequality to see
\begin{eqnarray*}
\displaystyle \frac{d}{dt} \sum_{i=1}^{N} |x_i - x_c|^2 &=& \sum_{i=1}^{N}
(x_i - x_c) \cdot \left(\frac{dx_i}{dt}  - \frac{d x_c}{dt}\right) \cr &=&
\sum_{i=1}^{N} (x_i - x_c) \cdot (v_i - v_c) \leq 
\sqrt{\sum_{i=1}^{N}|x_i - x_c|^2} \sqrt{\sum_{i=1}^{N} |v_i - v_c|^2}.
\end{eqnarray*}
Using Lemma \ref{lem:2-1} we obtain,
$\displaystyle \frac{d}{dt} {\EX}(t) \leq \sqrt{{\EV}(t)} \sqrt{{\EX}(t)} 
\leq \sqrt{\EV_0} e^{-\Nvareps \Phi(t)} \sqrt{\EX(t)}$,
and the solution of this differential inequality  yields
\begin{equation}\label{eq:rough}
\displaystyle {\EX}(t) \leq 2 {\EX}_0 + \frac{{\EV}_0}{2}
\Big[ \int_0^t e^{- \Nvareps \Phi(\tau)}
d\tau \Big]^2 \leq 2 {\EX}_0 + {\EV}_0\frac{ t^2}{2}.
\end{equation}
\end{proof}

As a corollary of Lemma \ref{lem:2-2} we now obtain the desired lower bound for
$\varphi(t)$. 

\begin{corollary}\label{cor:2-1} Let $(x_i(t), v_i(t))$ be the solutions to 
\eqref{B1},\eqref{Bd}. Then $\varphi(t)$ satisfies
\[
\displaystyle  \varphi(t) \geq r\big(\sqrt{2\EX(t)}\big) \geq r\big(\sqrt{4{\EX}_0 + {\EV}_0 t^2}\big).
\]
\end{corollary}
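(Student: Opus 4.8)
The plan is to exploit the structural hypothesis \eqref{Bd} that the kernel is a \emph{decreasing} function of the interparticle distance. Since $r(x_i,x_j)=r(|x_i-x_j|)$ with $r(\cdot)$ decreasing, the minimal value of the interaction over all pairs is attained at the two particles that are \emph{farthest} apart, so that
\[
\varphi(t)=\min_{1\le i,j\le N} r(|x_i-x_j|)=r\Big(\max_{1\le i,j\le N}|x_i(t)-x_j(t)|\Big).
\]
The whole matter therefore reduces to producing an upper bound on the \emph{diameter} $\max_{i,j}|x_i-x_j|$ in terms of the position fluctuation $\EX(t)$.

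First I would bound the diameter by the center-of-mass fluctuations. By the triangle inequality followed by the elementary estimate $(a+b)^2\le 2(a^2+b^2)$,
\[
|x_i-x_j|^2 \le \big(|x_i-x_c|+|x_j-x_c|\big)^2 \le 2\big(|x_i-x_c|^2+|x_j-x_c|^2\big)\le 2\sum_{k=1}^N|x_k-x_c|^2 = 2\EX(t),
\]
where the last step merely drops the remaining nonnegative terms in the sum defining $\EX(t)$. Maximizing over the pair $(i,j)$ gives $\max_{i,j}|x_i-x_j|\le\sqrt{2\EX(t)}$, and since $r(\cdot)$ is decreasing this yields the first inequality, $\varphi(t)=r\big(\max_{i,j}|x_i-x_j|\big)\ge r\big(\sqrt{2\EX(t)}\big)$.

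The second inequality is then immediate from Lemma \ref{lem:2-2}, which gives $\EX(t)\le 2\EX_0+\EV_0 t^2/2$, so that $2\EX(t)\le 4\EX_0+\EV_0 t^2$ and hence $\sqrt{2\EX(t)}\le\sqrt{4\EX_0+\EV_0 t^2}$; applying the monotonicity of $r$ once more gives $r\big(\sqrt{2\EX(t)}\big)\ge r\big(\sqrt{4\EX_0+\EV_0 t^2}\big)$. I do not expect a genuine obstacle here — the single point requiring care is the very first observation that the \emph{minimum} of $r$ corresponds to the \emph{maximum} of the distance, which is precisely where the monotonicity hypothesis \eqref{Bd} is used; everything else is the triangle inequality together with the already-established variance bound of Lemma \ref{lem:2-2}.
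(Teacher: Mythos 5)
Your proof is correct and follows essentially the same route as the paper: bound $|x_i-x_j|^2\leq 2\big(|x_i-x_c|^2+|x_j-x_c|^2\big)\leq 2\EX(t)$, invoke Lemma \ref{lem:2-2} for $2\EX(t)\leq 4\EX_0+\EV_0 t^2$, and use the monotonicity of $r(\cdot)$. The extra remark that $\varphi(t)=r\big(\max_{i,j}|x_i-x_j|\big)$ is a harmless repackaging of the same monotonicity step the paper applies directly.
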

\begin{proof} We use Lemma \ref{lem:2-2} to see that for each $i,j \in \{1, \cdots, N \}$,
\begin{eqnarray*}
\displaystyle |x_i - x_j|^2 \leq 2 (|x_i - x_c |^2 + |x_j -
x_c|^2) \leq  2 {\EX}(t) \leq  4{\EX}_0 + {\EV}_0 t^2.
\end{eqnarray*}
Since $r(\cdot)$ is decreasing, we have
$\displaystyle \varphi(t)  = \min_{1 \leq i, j \leq N} r(x_i(t), x_j(t))  
\geq r\big(\sqrt{4{\EX}_0 + {\EV}_0 t^2}\big)$.
\end{proof}

The asymptotic flocking now depends on the specific decay of the interparticle interaction $r(\cdot)$. As an example,  consider the C-S system with the $2\beta$ interaction \eqref{B1},\eqref{B2}, where

\begin{equation}\label{B-5}
\displaystyle  \varphi(t) \geq \ra(1 + 4{\EX}_0 + {\EV}_0 t^2)^{-\beta}  
\geq \ra\kappa_1 (1 + t)^{-2\beta}, \qquad \kappa_1:=\Big( \max\{ 1 + 4 {\EX}_0, {\EV}_0 \} \Big)^{-\beta}. 
\end{equation}
We conclude  that the divergence of $\Phi(t)=\int^t\varphi(\tau)d\tau$ and hence, by \eqref{eq:divergence} that flocking occurs, for $2\beta<1$. This recovers the Cucker-Smale result \cite{C-S1, C-S2}. Below we improve the Cucker-Smale result proving \emph{unconditional} flocking result for $\beta=1/2$. 

\begin{theorem}\label{thm:flk}
Let $(x_i(t), v_i(t))$ be the solutions to \eqref{B1},\eqref{B2}
with $\EV_0>0$. Then the following holds.

(i) There exist a positive constant, $C_2$ (depending only on $\kappa_1, \ra$ and $\beta$ as specified in \eqref{eqs:C2} below), such that 
\begin{subequations}
\begin{equation}\label{eq:expX}
|x_i(t)-x_c| \lesssim |x_i(0)-x_c| +C_2.
\end{equation}
 
(ii) There exists constants, $\kappa_i>0, i=1,2$, such that
\begin{equation}\label{eq:exp}
\displaystyle  |v_i(t) - v_c| \lesssim \sqrt{\EV_0}\times
\left\{
\begin{array}{ll}
     e^{-\Nvareps\ra\kappa_2 t}, & \beta \in [0, \frac{1}{2}), \qquad \kappa_2:={(1 + 4 {\EX}_0 + 8 C_2^2)^{-\beta}},\\ \\
     \displaystyle (1 + t)^{-\Nvareps\ra\kappa_1} ,&\beta = \frac{1}{2}, \qquad \kappa_1=\Big( \max\{ 1 + 4 {\EX}_0, {\EV}_0 \} \Big)^{-\beta}. \\
\end{array}
\right.
\end{equation}
\end{subequations}

\begin{remark}\label{rem:flk} Theorem  \ref{thm:flk} shows the two main features of flocking occur with the $2\beta$-interaction potential,  $2\beta\leq 1$, namely, the diameter 
$\max|x_i(t)-x_j(t)|$ remains uniformly bounded thus defining the traveling ``flock" with 
 velocity which is asymptotically particle-independent, $v_i(t) \approx v_c$.
\end{remark}

\end{theorem}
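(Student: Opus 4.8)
The plan is to convert the two \emph{unconditional} fluctuation estimates of Lemmas \ref{lem:2-1} and \ref{lem:2-2} into quantitative flocking rates by closing the feedback loop between the spread of positions and the decay of velocities. The crude quadratic bound $\EX(t)\leq 2\EX_0+\EV_0 t^2/2$ of Lemma \ref{lem:2-2}, fed through Corollary \ref{cor:2-1} and the polynomial kernel \eqref{B2}, already furnishes the lower bound $\varphi(t)\geq \ra\kappa_1(1+t)^{-2\beta}$ recorded in \eqref{B-5}. Integrating,
\[
\Phi(t)=\int_0^t\varphi(\tau)\,d\tau\geq \ra\kappa_1\int_0^t(1+\tau)^{-2\beta}\,d\tau,
\]
which diverges for every $2\beta\leq 1$: like $(1+t)^{1-2\beta}$ when $2\beta<1$ and like $\log(1+t)$ at the borderline $\beta=1/2$. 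By Remark \ref{rem:2-1} this alone guarantees flocking, so the two branches of \eqref{eq:exp} amount to quantifying this divergence. I would dispose of the critical case $\beta=1/2$ first, since there only the crude bound is available: inserting $\Phi(t)\geq \ra\kappa_1\log(1+t)$ into Lemma \ref{lem:2-1} gives $\EV(t)\leq \EV_0(1+t)^{-2\Nvareps\ra\kappa_1}$, and since $|v_i(t)-v_c|^2\leq \EV(t)$, taking square roots yields the algebraic rate $(1+t)^{-\Nvareps\ra\kappa_1}$.

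For $\beta\in[0,1/2)$ the divergence of $\Phi$ is polynomial, so $e^{-\Nvareps\Phi(\tau)}$ decays like a stretched exponential $\exp(-c(1+\tau)^{1-2\beta})$ and is therefore integrable on $[0,\infty)$. This integrability is the mechanism that upgrades the quadratic position bound to a uniform one, and it is where I would prove part (i): from $\frac{d}{dt}|x_i-x_c|\leq |v_i-v_c|\leq \sqrt{\EV_0}\,e^{-\Nvareps\Phi(t)}$ one integrates to
\[
|x_i(t)-x_c|\leq |x_i(0)-x_c|+\sqrt{\EV_0}\int_0^\infty e^{-\Nvareps\Phi(\tau)}\,d\tau=:|x_i(0)-x_c|+C_2,
\]
with $C_2<\infty$. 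The resulting uniform control of the diameter then makes Corollary \ref{cor:2-1} return a genuinely \emph{positive} floor $\varphi(t)\geq \ra\kappa_2$ with $\kappa_2=(1+4\EX_0+8C_2^2)^{-\beta}$; feeding $\Phi(t)\geq \ra\kappa_2\, t$ back into Lemma \ref{lem:2-1} gives $\EV(t)\leq \EV_0\, e^{-2\Nvareps\ra\kappa_2 t}$ and hence the exponential rate in \eqref{eq:exp}.

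The main obstacle is exactly this feedback loop: the decay rate of the velocities is governed by $\Phi$, whose growth is in turn governed by how tightly the positions remain bunched. For $\beta<1/2$ the loop closes cleanly in a single step, because the a priori quadratic growth of $\EX$ is already slow enough to force $\Phi$ to diverge fast enough for $e^{-\Nvareps\Phi}$ to be integrable, which immediately promotes the position estimate to a uniform one; no genuine bootstrap iteration is required. The delicate point is the borderline $\beta=1/2$, where $\Phi$ diverges only logarithmically and $e^{-\Nvareps\Phi}$ decays only algebraically: the velocity decay (ii) persists at the stated algebraic rate directly from the logarithmic divergence of $\Phi$, but the uniform position bound (i) is at the edge of integrability and requires the exponent $\Nvareps\ra\kappa_1$ to clear the integrability threshold — this is the place where I would expect the argument to need the most care.
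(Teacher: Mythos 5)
Your proposal is correct and takes essentially the same route as the paper: the identical three-step bootstrap for $\beta\in[0,\tfrac12)$ (stretched-exponential decay of $\EV(t)$ from the crude bound \eqref{B-5}, integrability of $e^{-\Nvareps\Phi}$ giving the uniform position bound with the same constant $C_2$, then the improved floor $\varphi(t)\geq \ra\kappa_2$ fed back through Lemma \ref{lem:2-1} for the exponential rate), together with the same direct logarithmic estimate of $\Phi$ at $\beta=\tfrac12$. Your closing observation that the uniform position bound at the borderline $\beta=\tfrac12$ only holds when $\Nvareps\ra\kappa_1$ exceeds the integrability threshold matches the paper's own remark following its proof.
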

\begin{proof} We begin with the case $0 \leq \beta < \frac{1}{2}$. To get the optimal exponential convergence rate, we employ a bootstrapping argument in three steps.

\medskip\noindent
{\bf Step 1}.  We first obtain a weak integrable decay rate for $|v_i - v_c|$. Using \eqref{B-5} we find
\begin{subequations}\label{eqs:C2}
\begin{equation}\label{eq:C2a}
\displaystyle -\int_0^t \varphi(\tau) d\tau \leq -\ra\kappa_1 \int_0^t (1 +
\tau)^{-2\beta} d\tau \lesssim C_1\Big( 1 - (1 + t)^{1-2\beta} \Big), \qquad 0\leq \beta<\frac{1}{2}.
\end{equation}
The above estimate together with  Lemma \ref{lem:2-1} yield
${\EV}(t) \lesssim   {\EV}_0 e^{-2C_1(1 + t)^{1-2\beta}}$. 

\medskip\noindent
{\bf Step 2}. Next, we  improve lemma  \ref{lem:2-2}, observing that for $\beta<1/2$, the position $\EX(t)$ remains uniformly bounded in time. Indeed, we have,
\[
\displaystyle x_i(t) = x_i(0) + \int_0^{t} v_i(\tau) d\tau, \qquad 
\displaystyle x_c(t) = x_c(0) + \int_0^t v_c d\tau.
\]
Time integrability of $|v_i(t)-v_c(t)|$ then yields (\ref{eq:expX}),
\begin{eqnarray*}
\displaystyle |x_i(t) - x_c(t)| &\leq& |x_i(0) - x_c(0)| + \int_0^t
|v_i(\tau) - v_c| d\tau \cr \displaystyle
&\lesssim& |x_i(0) - x_c(0)| + \int_0^{\infty} e^{-C_1 (1 +
t)^{1-2\beta}} dt \leq |x_i(0) - x_c(0)| + C_2,
\end{eqnarray*}
where
\begin{equation}\label{eq:C2}
C_2 \lesssim \int_0^{\infty} e^{-C_1 (1 + t)^{1-2\beta}} dt <
\infty, \quad 0<\beta < \frac{1}{2}. 
\end{equation}
\end{subequations}
\noindent
{\bf Step 3}. The uniform bound of $\EX(t)$ implies an improved estimate for the interparticle interaction $\varphi(t)$: corollary \ref{cor:2-1} implies
\[ 
\displaystyle \varphi(t) \geq r(\sqrt{2\EX_0}) \geq {\ra}{(1 + 4 {\EX}_0 + 8 C_2^2 )^{-\beta}}=\ra\kappa_2,
\]
which in turn, using lemma \ref{lem:2-1}, yields the optimal exponential convergence rate
\eqref{eq:exp}
\[ 
\displaystyle |v_i(t)-v_c(t)|^2 < \EV(t) \leq \EV_0 e^{-2 \Nvareps
\Phi(t)} \lesssim  e^{-2\Nvareps \ra\kappa_2 t}.
\]

\bigskip
It remains to deal with the case $\beta =\frac{1}{2}$. Here, we have
\[
\displaystyle -2 \Nvareps \Phi(t)  \leq -2\Nvareps \ra \Big(
\max\{ 1 + 4 {\EX}_0, {\EV}_0 \}\Big)^{-\frac{1}{2}}
\int_0^t (1 + \tau)^{-1} d\tau  = -2\Nvareps \ra\kappa_1 \ln (1 + t),
\]
which  in turn implies \eqref{eq:exp}, ${\EV}(t) \leq  {\EV}_0  e^{-2\Nvareps \Phi(t)} \leq  {\EV}_0  (1 + t)^{-2\Nvareps\ra\kappa_1}$. 
\end{proof}

\begin{remarks}
\begin{enumerate}
\item Consider  the borderline case $\beta = \frac{1}{2}$ with initial configuration satisfying
\[ 
\displaystyle \Nvareps\kappa_1 > 1, \quad \mbox{ i.e.,} \quad \sqrt{\max\{ 1 + 4 {\EX}_0, 
{\EV}_0 \}} < \Nvareps; 
\]
then the same bootstrapping argument used for $\beta \in [0, \frac{1}{2})$ gives the exponential convergence:
\[ 
\displaystyle |v_i(t) - v_c | \leq \sqrt{\EV_0} e^{-{ \ra\widetilde\kappa}_2 t}. 
\]

\item Flocking occurs even if $\beta > \frac{1}{2}$, but only  for special initial configurations. Sufficient flocking conditions for such initial profiles is presented in \cite{C-S1}.
\end{enumerate}
\end{remarks}

\section{From particle to kinetic description of flocking}\label{sec:kinetic}
\setcounter{equation}{0} 
\subsection{Derivation of a mean-field model}

We assume that the number of particles involved in the C-S model \eqref{B1},\eqref{Br} is large enough that it becomes meaningful to observe the $N$-particle distribution function, 
\begin{subequations}\label{eqs:fN}
\begin{equation}\label{eq:fN}
f^N =f^N(x_1,v_1, \ldots, x_N,v_N,t), \quad  (x_i,v_i)\in \bbr^{d}\times \bbr^{d}.
\end{equation}
Since particles are indistinguishable, the probability density $f^N=f^N(\cdot)$ is symmetric in its phase-space arguments,
\begin{equation}\label{eq:fNsymm}
f^N( \cdots, x_i, v_i, \cdots x_j, v_j, \cdots,t) = f^N(\cdots, x_j,
v_j, \cdots x_i, v_i, \cdots,t),
\end{equation}
\end{subequations}
so we can `probe' $f^N$ by any of its $N$ pairs of phase-variables. 
Let $f^N(\cdot,\cdot,t)$ denote the marginal distribution
\[
f^N(x_1,v_1,t) := \int_{\bbr^{2d(N-1)}}f^N (x_1, v_1, x_-,v_-,t) dx_- dv_-, \qquad (x_-,v_-):=(x_2, v_2, \cdots, x_N, v_N).
\]

The formal derivation of a kinetic description for the C-S particle system \eqref{B1},\eqref{Br} is   carried out below  using  the 
BBGKY hierarchy, e.g., \cite{B-D-P, R-S1, R-S2},  based on  the Liouville equation,  \cite{K} 
\begin{equation} \label{eq2-1}
\displaystyle \partial_t f^N + \sum_{i=1}^{N} v_i\cdot \nabla_{x_i} f^N  +
 \vareps \sum_{i =1}^{N} \nabla_{v_i}\cdot \Big( \sum_{j =1}^{N} r(x_i, x_j) (v_j - v_i) f^N  \Big) = 0.
\end{equation}
To this end, one study the marginal distribution $f^N(x_1,v_1,t)$ by integration of \eqref{eq2-1} with respect to $dx_- dv_-=dv_2dx_2\cdots dv_Ndx_N$ (to simplify the notations, we now suppress the time-dependence whenever it is clear by the context, denoting $f^N(x_1, v_1, \cdots, x_N, v_N,t) = f^N(x_1, v_1, \cdots, x_N, v_N)$). Since $f^N(\cdot,\cdot)$ is rapidly decaying at infinity, the transport term  in \eqref{eq2-1} amounts to
\begin{equation}\label{eq:trans}
\int_{\bbr^{2d(N-1)}} \sum_{i=1}^{N}  v_i\cdot \nabla_{x_i} f^N 
dx_{-} dv_{-}  =  v_1\cdot\nabla_{x_1} f^N(x_1,v_1).
\end{equation}
The corresponding integration of the forcing term in \eqref{eq2-1}, yields
\begin{eqnarray*}
&& \vareps \sum_{i=1}^{N} \int_{\bbr^{2d(N-1)}} \sum_{j=1}^{N} \nabla_{v_i}\cdot \Big( r(x_i, x_j)(v_j - v_i) f^N  \Big) dx_-dv_-  \cr && \hspace{0.5cm} =~ \vareps
\int_{\bbr^{2d(N-1)}}\sum_{2\leq j \leq N}
\nabla_{v_1}\cdot \Big(r(x_1, x_j) (v_j - v_1) f^N  \Big) dx_-dv_-.
\end{eqnarray*}
But the symmetry of $f^N$, \eqref{eq:fNsymm}, implies that the integrals being summed above are the same for $j=2,3 \ldots, N$. Consequently, it will suffice to consider $j=2$:
\begin{eqnarray}
&& \vareps \sum_{i=1}^{N} \int_{\bbr^{2d(N-1)}} \sum_{j=1}^{N} \nabla_{v_i}\cdot \Big( r(x_i, x_j)(v_j - v_i) f^N  \Big) dx_{-}dv_{-}  \nonumber \\ && \hspace{0.5cm} =~
 \frac{\lam}{N}(N-1) \int_{\bbr^{2d(N-1)}} r(x_1, x_2) \nabla_{v_1}\cdot \Big( (v_2 - v_1) 
f^N  \Big)dx_{2} dv_{2} \cdots d x_N d v_N \label{eq:force} \\
  && \hspace{0.5cm}  = \left(\lam-\frac{\lam}{N}\right)  
\nabla_{v_1}\cdot \Big( \int_{\bbr^{2d}} r(x_1, x_2)(v_2 - v_1) g^N dx_{2} dv_{2} \Big).
\nonumber
\end{eqnarray}
Here $g^N$ is the two-particle marginal function
\[
g^N(x_1,v_1,x_2,v_2,t) := \int_{\bbr^{2d(N-2)}}f^N dx_{3} dv_{3} \cdots d x_N d v_N.
\] 
Thus, in view of \eqref{eq:trans} and \eqref{eq:force}, 
marginal integration of \eqref{eq2-1} over $(x_-,v_-)$ implies that  
the one-particle density function, $f^N(x_1,v_1,t)$, satisfies
\begin{eqnarray*}
\displaystyle  \partial_t f^N + v_1\cdot\nabla_{x_1} f^N 
               + \left(\lam-\frac{\lam}{N}\right) \nabla_{v_1}\cdot 
		\Big( \int_{\bbr^{2d}} r(x_1,x_2)(v_2 - v_1) g^N dx_{2} dv_{2} \Big)=0.
\end{eqnarray*}
We now take the mean-field limit $N\rightarrow \infty$: we end up with the  one- and two-particle limiting densities, $f:=\lim_{N\rightarrow \infty} f^N(x_1,v_1)$ and $g:=\lim_{N\rightarrow \infty} g^{N}(x_1,v_1,x_2,v_2)$, which satisfy 
\begin{equation}
\displaystyle \partial_t f +  v_1\cdot\nabla_{x_1}f +
 \lam  \nabla_{v_1}\cdot \Big( \int_{\bbr^{2d}} r(x_1, x_2)(v_2 - v_1) g dx_{2} dv_{2}=0 \Big).
\end{equation}
To close the above equation we make the ``molecular
chaos" assumption about the independence of the two-point particle distribution,
\[
\displaystyle g(x_1, v_1, x_2, v_2, t) = f(x_1, v_1, t) f(x_2,v_2,t);
\]
Relabel, $(x_1,v_1)\mapsto (x,v)$ and $(x_2,v_2)\mapsto (y,\vst)$. We conclude that the one-particle distribution function $f(x,v,t)$ satisfies the Vlasov-type mean-field model,

\begin{subequations}\label{D-1}
\begin{align}\label{eq:D-1a}
\displaystyle & \partial_t f +  v\cdot\nablax f + \lambda
\nablav\cdot\Col=0, \\ 
\displaystyle & \Col (x,v,t) :=
\int_{\bbr^{2d}} r(x,y) (\vst- v) f(x,v,t)f(y,\vst,t) d\vst dy.
\end{align}
Here, $\Col$ is the quadratic interaction which can be expressed in the equivalent form
\begin{align}\label{eq:D-1b}
\displaystyle & \Col (x,v,t) = f\Qf, \quad \Qf(x,v,t):= 
\int_{\bbr^{2d}} r(x,y) (\vst-v)f(y,\vst,t) d\vst dy.
\end{align} 
\end{subequations}

\subsection{A priori estimates}\label{subsec:kinetic-trajec}
We begin our study with a series of a priori estimates on the solution of the mean-field model 
(\ref{D-1}), and the growth rate of the $x$ and $v$-support of $f$. 
We first set
\[ 
\psi_0(\xi) := \xi, \quad \psi_i(\xi) := \xi_i \ \ i = 1,\ldots, d, \quad \mbox{ and } \quad \psi_{d+1}(\xi) := |\xi|^2. 
\]
Let $f$ be a classical solution to \eqref{D-1} with a rapid decay in phase space $\bbr^{2d}$.
A straightforward integration of \eqref{D-1} yields 
\begin{subequations}\label{eqs:mom}
\begin{eqnarray}\label{eq:moma}
\frac{d}{dt} \int_{\bbr^{2d}} \psi_i(v)
f(x,v) dv dx & = & \int_{\bbr^{2d}} \nabla_v \psi_i(v) \cdot \Col dv dx, \\
\frac{d}{dt} \int_{\bbr^{2d}} \psi_i(x)
f(x,v) dv dx & = & \int_{\bbr^{2d}} \nabla_{x} \big(\psi_i(x) \cdot v\big)
f(x,v) dv dx. \label{eq:momb}
\end{eqnarray}
\end{subequations}

\noindent
Using \eqref{eqs:mom} we obtain
\begin{proposition} Let $f$ be a classical solutions decaying fast enough at infinity in phase space.
Then following macroscopic quantities associated with $f$,
 satisfy
\begin{subequations}\label{eqs:moremom}
\begin{eqnarray}
& & \frac{d}{dt} \int_{\bbr^{2d}} v f(x,v) dx dv = 0; \label{eq:moremoma}\\
& & \frac{d}{dt} \int_{\bbr^{2d}} |v|^2 f(x,v) dxdv
= -\int_{\bbr^{4d}} r(x,y) |v-\vst|^2 f(x,v) f(y,\vst) d\vst dy dvdx;  \label{eq:moremomb}\\
& & \frac{d}{dt} \int_{\bbr^{2d}}
f^p(x,v) dv dx = -d(p-1) \int_{\bbr^{4d}} r(x,y) f(y,\vst) f^p(x,v) d\vst dv dy dx. 
\label{eq:moremomc}
\end{eqnarray}
\end{subequations}
\end{proposition}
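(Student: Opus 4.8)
The plan is to read the first two identities \eqref{eq:moremoma}--\eqref{eq:moremomb} off the general moment relation \eqref{eq:moma} by selecting the appropriate weight $\psi$ and then symmetrizing, whereas the third identity \eqref{eq:moremomc}, being nonlinear in $f$, cannot be obtained this way and instead calls for a renormalization of the Vlasov equation \eqref{eq:D-1a} itself.

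For \eqref{eq:moremoma} I specialize \eqref{eq:moma} to the linear weights $\psi_i(v)=v_i$, so that $\nabla_v\psi_i\equiv e_i$ and the right-hand side is the $i$-th component of $\int_{\bbr^{2d}}\Col\,dvdx$. Writing out the definition of $\Col$ and relabelling the dummy variables $(x,v)\leftrightarrow(y,\vst)$, the kernel is unchanged since $r(x,y)=r(y,x)$ and the product $f(x,v)f(y,\vst)$ is invariant, while $(\vst-v)\mapsto(v-\vst)$ flips sign; hence the double integral equals its own negative and vanishes. For \eqref{eq:moremomb} I take $\psi_{d+1}(v)=|v|^2$, so $\nabla_v\psi_{d+1}=2v$ and \eqref{eq:moma} gives $\frac{d}{dt}\int_{\bbr^{2d}}|v|^2f\,dvdx=2\int_{\bbr^{4d}}r(x,y)\,v\cdot(\vst-v)f(x,v)f(y,\vst)\,d\vst dy\,dvdx$. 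The same relabelling lets me replace the integrand by half the sum of itself and its swap, and the elementary identity $v\cdot(\vst-v)+\vst\cdot(v-\vst)=-|v-\vst|^2$ turns this into the claimed energy-dissipation law. Both computations exploit only the symmetry $r(x,y)=r(y,x)$, not the monotonicity \eqref{Bd}.

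The third identity is the crux. Since $\int f^p$ is not a velocity moment, I return to the factored form \eqref{eq:D-1b}, $\Col=f\Qf$, and multiply \eqref{eq:D-1a} by $pf^{p-1}$ to obtain a balance law for $f^p$. The transport term becomes the $x$-divergence $\nabla_x\cdot(vf^p)$, and expanding $\nabla_v\cdot(f\Qf)=\Qf\cdot\nabla_v f+f\,\nabla_v\cdot\Qf$ turns the first piece into $\Qf\cdot\nabla_v f^p=\nabla_v\cdot(f^p\Qf)-f^p\,\nabla_v\cdot\Qf$. Integrating over $\bbr^{2d}$ annihilates both divergence fluxes by the standing rapid-decay hypothesis, and the two surviving copies of $\int f^p\,\nabla_v\cdot\Qf$ — one carrying coefficient $+1$ from the velocity integration by parts, one carrying $-p$ from the renormalization factor — combine into $(1-p)\int f^p\,\nabla_v\cdot\Qf$. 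The key local computation is $\nabla_v\cdot\Qf(x,v)=\int_{\bbr^{2d}}r(x,y)\big(\nabla_v\cdot(\vst-v)\big)f(y,\vst)\,d\vst dy=-d\int_{\bbr^{2d}}r(x,y)f(y,\vst)\,d\vst dy$, so the right-hand side equals $d(p-1)\int_{\bbr^{4d}}r(x,y)f(y,\vst)f^p(x,v)\,d\vst dy\,dvdx$, reproducing the structure of \eqref{eq:moremomc}; the orientation of the sign is pinned down by the limit $p\to1^+$, which returns the entropy inequality $\frac{d}{dt}\int_{\bbr^{2d}} f\log f\,dvdx\ge0$ recorded in the introduction.

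I expect the bookkeeping in this last step to be the main obstacle: one must cleanly separate the single power of $\nabla_v\cdot\Qf$ produced by the renormalization from the one produced by integrating $\Qf\cdot\nabla_v f^p$ by parts, and justify that every boundary and flux term vanishes — which is precisely where the hypothesis that $f$ decays fast enough in phase space is indispensable (note $\Qf$ grows only linearly in $v$, so $f^p\Qf\to0$ is guaranteed). By comparison, the symmetrization behind \eqref{eq:moremoma}--\eqref{eq:moremomb} is routine.
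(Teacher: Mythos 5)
You follow the same route as the paper: \eqref{eq:moremoma} and \eqref{eq:moremomb} are read off the moment relation \eqref{eq:moma} with $\psi(v)=v_i$, respectively $|v|^2$, followed by the symmetrization $(x,v)\leftrightarrow(y,\vst)$, and \eqref{eq:moremomc} is obtained by multiplying the equation by $pf^{p-1}$ and splitting $\nablav\cdot(f\Qf)$ into a pure divergence plus a multiple of $f^p\,\nablav\cdot\Qf$ --- exactly the two identities used in the paper's proof. Your treatment of the first two parts coincides with the paper's and is correct.

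For \eqref{eq:moremomc}, however, your bookkeeping and the paper's proof part ways at the last step, and yours is the correct one. From $\Qf=\int_{\bbr^{2d}} r(x,y)(\vst-v)f(y,\vst)\,d\vst dy$ one gets $\nablav\cdot\Qf=-d\int_{\bbr^{2d}} r(x,y)f(y,\vst)\,d\vst dy$, which is what you use and what the paper itself records in Section \ref{subsec:kinetic-global}; consequently
\begin{equation*}
\frac{d}{dt}\int_{\bbr^{2d}} f^p\,dv\,dx
\;=\;-\lambda(p-1)\int_{\bbr^{2d}} f^p\,\nablav\cdot\Qf\,dv\,dx
\;=\;+\,\lambda d(p-1)\int_{\bbr^{4d}} r(x,y)\,f(y,\vst)\,f^p(x,v)\,d\vst\,dv\,dy\,dx\;\geq\;0.
\end{equation*}
The paper's final line implicitly takes $\nablav\cdot\Qf=+d\int r f_*$, so the minus sign in the printed identity \eqref{eq:moremomc} --- and the accompanying parenthetical claim that higher $L^p$ norms decay --- is a sign error. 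The plus sign is the internally consistent one: the alignment field is contractive in $v$, so $f$ \emph{grows} along characteristics (which is why \eqref{D-5-1} is only an exponentially growing bound), and the $p\to1^+$ limit of the corrected identity reproduces the entropy increase $\frac{d}{dt}\int f\log f\geq 0$ asserted in the introduction and the epilogue --- precisely the consistency check you invoke. So your argument proves the sign-corrected statement; the one thing to make explicit, rather than saying the computation ``reproduces the structure'' of \eqref{eq:moremomc}, is that \eqref{eq:moremomc} as written has the wrong sign (and omits the factor $\lambda$, which both you and the paper drop throughout).
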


\begin{proof}
 Equality \eqref{eq:moremoma} follows from \eqref{eq:moma} with $\psi_i(v) = v_i$,
\begin{eqnarray*}
  \int_{\bbr^{2d}} \nabla_v \psi_i(v) \cdot \Col dv dx 
  = \int_{\bbr^{4d}} r(x,y) (v_{*i}-v_i) f(y,\vst) f(x,v) d\vst dy dv dx = 0.
\end{eqnarray*}
The last integral vanishes due to antisymmetry of the integrand, realized by the interchange of variables $(x,v)\leftrightarrow (y,\vst)$. 
 The statement of \eqref{eq:moremomb} follows from \eqref{eq:momb} with $\psi_{d+1}(v)=|v|^2$,
 and observing that
\begin{eqnarray*}
 \displaystyle && \int_{\bbr^{2d}} \nabla_v \psi_{d+1}(v) \cdot \Col dv dx 
  = 2\int_{\bbr^{2d}} v \cdot \Col dv dx \\
 \displaystyle && \hspace{2cm} = 2\int_{\bbr^{4d}} r(x,y) v \cdot (\vst - v) f(y,\vst) f(x,v) d\vst dy dv dx \cr
 \displaystyle && \hspace{2cm} = -2 \int_{\bbr^{4d}} r(x,y) \vst \cdot (\vst - v) f(y,\vst) f(x,v) d\vst dy dv dx \cr
 \displaystyle && \hspace{2cm} = -\int_{\bbr^{4d}} r(x,y) |v-\vst|^2 f(x,v) f(y,\vst) d\vst dy dv dx.
\end{eqnarray*}

\noindent
Finally, we note the two identities, $f^{p-1} v\cdot\nablax f  \equiv \frac{1}{p} v\cdot\nablax  f^p$, and  
\[
\displaystyle f^{p-1} \nablav\cdot \Col \equiv \nablav\cdot \Big( \frac{\Qf  f^p}{p} \Big) + \Big(1-\frac{1}{p} \Big) \Big( \nablav\cdot \Qf  \Big) f^p, \qquad \Col=f\Qf.
\]
Integration of \eqref{D-1} against $f^{p-1}$ then yields
\begin{eqnarray*}
\frac{d}{dt} \int_{\bbr^{2d}} f^p dv dx  
 &=& -p \int_{\bbr^{2d}} f^{p-1} \Big( v\cdot\nablax f + \nablav\cdot\Col \Big) dv dx  \cr
 &=& -(p-1) \int_{\bbr^{2d}}  \Big( \nablav\cdot \Qf  \Big) f^p dv dx \cr
 &=& -d(p-1) \int_{\bbr^{4d}} r(x,y) f(y,\vst) f^p(x,v) d\vst dv dy dx.
\end{eqnarray*}
\end{proof}

Let $f$ be a classical kinetic solution of \eqref{D-1}. 
The statement of \eqref{eq:moremomc} shows that its $L^1(dxdv)$norm, the total macroscopic mass is conserved in time (while according to (\ref{eq:moremomc}), higher $L^p(dxdv)$-norms of $f$ decay in time), 
\begin{subequations}\label{eqs:Ms}
\begin{equation}\label{eq:Mi}
{\mathcal M}_0(t):=\int_{\bbr^{2d}}f(x,v,t)dxdv \equiv {\mathcal M}_0.
\end{equation}

Similarly, \eqref{eq:moremoma} tells us that the total macroscopic momentum is conserved in time,
\begin{equation}\label{eq:Mii}
{\mathcal M}_1(t) := \int_{\bbr^{2d}} v f(x,v,t) dxdv \equiv {\mathcal M}_1.
\end{equation}
Finally, \eqref{eq:moremomb} tells us that the total amount of macroscopic  energy is non-increasing in time,
\begin{equation}\label{eq:Miii}
\mathcal{M}_2(t):=\int_{\bbr^{2d}}|v|^2f(x,v,t)dvdx \leq \mathcal{M}_2(0).
\end{equation}
\end{subequations}
Here, ${\mathcal M}_0:={\mathcal M}_0(0), \ {\mathcal M}_1:={\mathcal M}_1(0)$ and $\mathcal{M}_2(0)$ denote, respectively, the initial amounts of mass, momentum and energy at $t=0$.
Next, we turn to the following a priori bound on the kinetic velocity.

\begin{lemma}\label{lem:4-2}
Let $[x(t), v(t)]$ be the particle trajectory issued from $(x,v) \in \mbox{supp}_{(x,v)} \fin $ at time $0$. Then the $i$-component of velocity trajectory, $v_i(t)
= v_i(t;0,x,v), \ i=1, \cdots, d$, satisfies
\begin{eqnarray*}
 \displaystyle &&  v_i(t) \in \Big(v_i(0) e^{-\lambda \rA \Iz t}  - \frac{\Jz}{\Iz} ( 1 - e^{-\lambda \rA \Iz t} ),  \ \  v_i(0) e^{-\lambda \Iz  \Phi(t)  }
+ \lambda \rA \Jz \int_0^t e^{-\lambda \Iz (\Phi(t)-\Phi(s))} ds \Big).
\end{eqnarray*}
Here, $\Phi(t):= \int_0^t \varphi(s)ds$,  $\Iz= \|\fin \|_{L^1_{x,v}}$ is the initial total mass and $\Jz:= \sqrt{{\mathcal M}_0{\mathcal M}_2}$.
\end{lemma}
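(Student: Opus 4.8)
The plan is to read the velocity dynamics off the characteristics of \eqref{D-1} and reduce it to a scalar linear ODE whose coefficients are controlled by the conserved mass $\Iz$ and the non-increasing energy $\mathcal{M}_2$. Writing the Vlasov model in transport form $\partial_t f + v\cdot\nablax f + \lambda\,\nablav\cdot(f\,\Qf)=0$, the density is constant along the trajectories $\dot x(t)=v(t)$, $\dot v(t)=\lambda\,\Qf(x(t),v(t),t)$; hence the $i$-th velocity component issued from $(x,v)\in\mbox{supp}_{(x,v)}\fin$ obeys
\[
\frac{d}{dt}v_i(t)=\lambda\int_{\bbr^{2d}} r(x(t),y)\,\big(v_{*i}-v_i(t)\big)\,f(y,\vst,t)\,d\vst\,dy =: -a(t)\,v_i(t)+b_i(t),
\]
where the scalar coefficients are
\[
a(t):=\lambda\int_{\bbr^{2d}} r(x(t),y)\,f(y,\vst,t)\,d\vst\,dy, \qquad b_i(t):=\lambda\int_{\bbr^{2d}} r(x(t),y)\,v_{*i}\,f(y,\vst,t)\,d\vst\,dy.
\]

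Next I would bound these coefficients using the conservation laws \eqref{eqs:Ms}. Since $r(x,y)=r(|x-y|)$ is decreasing, on the (compact) support of $f$ it is squeezed, $\varphi(t)\le r\le \rA$, with $\varphi(t)$ the minimal interaction over the support; together with conservation of mass, $\int_{\bbr^{2d}} f(y,\vst,t)\,d\vst\,dy=\Iz$, this yields $\lambda\,\Iz\,\varphi(t)\le a(t)\le \lambda\,\rA\,\Iz$. For the forcing I would combine $r\le\rA$ with Cauchy--Schwarz and the facts that mass is conserved, \eqref{eq:Mi}, while energy is non-increasing, \eqref{eq:Miii}:
\[
|b_i(t)|\le \lambda\,\rA\int_{\bbr^{2d}}|\vst|\,f(y,\vst,t)\,d\vst\,dy \le \lambda\,\rA\sqrt{\mathcal{M}_0\,\mathcal{M}_2} = \lambda\,\rA\,\Jz.
\]
Integrating the linear ODE via the integrating factor $\exp\big(\int_0^t a\big)$ gives
\[
v_i(t)=v_i(0)\,e^{-\int_0^t a(s)\,ds}+\int_0^t e^{-\int_s^t a(\tau)\,d\tau}\,b_i(s)\,ds .
\]
For the right endpoint I would insert $b_i\le\lambda\rA\Jz$ and use $\int_s^t a\ge \lambda\Iz(\Phi(t)-\Phi(s))$ (the lower bound on $a$), producing the $\Phi$-weighted integral. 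For the left endpoint I would instead compare $v_i$ with the solution $w$ of the constant-coefficient problem $\dot w=-\lambda\rA\Iz\,w-\lambda\rA\Jz$, $w(0)=v_i(0)$ — motivated by $a\le\lambda\rA\Iz$ and $b_i\ge-\lambda\rA\Jz$ — whose explicit solution $w(t)=v_i(0)e^{-\lambda\rA\Iz t}-\tfrac{\Jz}{\Iz}(1-e^{-\lambda\rA\Iz t})$ is exactly the claimed lower bound.

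The main obstacle is not the integration but the coefficient estimates together with the sign bookkeeping in the comparison. The bound $|b_i|\le\lambda\rA\Jz$ rests precisely on conservation of mass and on energy monotonicity, and the lower bound $a\ge\lambda\Iz\varphi(t)$ hinges on the compact support of $f$, so one must also confirm that the support stays compact for $\varphi(t)$ to be well defined. Finally, reconciling the two one-sided Gronwall comparisons with the stated interval requires keeping track of the signs of $v_i(0)$ and of $v_i(t)$ so that each differential inequality points in the intended direction; this is exactly where the two endpoints are forced to use the \emph{opposite} extremes of $a(t)$ — the lower value $\lambda\Iz\varphi(t)$, yielding the decaying right endpoint through $\Phi(t)$, and the upper value $\lambda\rA\Iz$, yielding the left endpoint through $\lambda\rA\Iz\,t$.
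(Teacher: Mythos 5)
Your proposal follows essentially the same route as the paper: split $\dot v_i = \lambda L_i[f]$ into $-a(t)v_i + b_i(t)$, bound $\lambda\Iz\varphi(t)\le a(t)\le\lambda\rA\Iz$ using mass conservation and the compact support, bound $|b_i|\le\lambda\rA\Jz$ via Cauchy--Schwarz with mass conservation and energy monotonicity, and then integrate the resulting two-sided differential inequality by Gronwall. Your explicit integrating-factor/comparison formulation is just a more carefully written version of the paper's ``Gronwall's integration,'' and your remark about sign bookkeeping flags a subtlety (the stated endpoints implicitly require $v_i\ge 0$ in the comparisons) that the paper itself glosses over.
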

\begin{proof}
For given $(x,v) \in \mbox{supp}_{x,v} \fin $, we set
$ x(s) \equiv x(s;0,x,v)$ and $v(s) \equiv v(s;0,x,v)$. 
Note that for each $i = 1, \cdots, d$, we have
\begin{align}\label{D-6}
& \Qvfi{i}{f(x(t),v(t),t)}  = \int_{\bbr^{2d}} r(x(t), y) (v_{*i}(t)- v_{i}) f(y,\vst)d\vst dy   \\
&\hspace{1cm}= v_i(t) + \int_{\bbr^{2d}} r(x(t), y) v_{*i} f(y,\vst) d\vst dy
-\Big( \int_{\bbr^{2d}} r(x(t), y) f(y,\vst) d\vst dy   \Big). \nonumber
\end{align}
Lower and upper bounds for the kinetic velocities are obtained in terms of the estimates,
\begin{align} 
\displaystyle  & \varphi(t) \Iz \leq 
\int_{\bbr^{2d}} r(x(t), y) f(y,\vst) d\vst dy    \leq \rA \Iz, \nonumber \\
\displaystyle &  \Big| \int_{\bbr^2} r(x,y) \vst f(y,\vst) d\vst dy \Big| \leq
\rA \sqrt{\|\fin \|_{L^1_{x,v}}} \sqrt{\| |v|^2 \fin \|_{L^{1}_{x,v}}} = \rA \Jz.
\nonumber
\end{align}
It then follows from (\ref{D-6}) that
\[
- \lambda \rA \Iz  v_i(s) -\lambda \rA \Jz \leq \frac{d}{dt} v_i(t) = \lambda 
\Qvfi{i}{f(x(t),v(t),t)} 
           \leq - \lambda \varphi(t)  \Iz v_i(t) + \lambda\rA \Jz,
\]
and the desired result follows by Gronwall's integration.
\end{proof}

\begin{remark}\label{rem:4-2}
Let $\Pt(t)$ denote the  $v$-projection  of $\mathrm{supp}f(\cdot,t)$,
\begin{equation}\label{eq:Pt}
\Pt(t) := \{ v \in \bbr^d: \exists~~(x,v) \in \bbr^{2d} \quad \mbox{ such that } \quad f(x,v,t) \not = 0 \}.
\end{equation}
Lemma \ref{lem:4-2} shows that if $\fin(x,\cdot)$ is compactly supported, then $\mathrm{supp} f(x,\cdot,t)$ remains finite, with  a weak growth estimate for the velocity trajectory,
\begin{equation}\label{eq:yyz}  
|v_i(t)| \leq  \max \Big \{ \eta_0+\frac{\Jz}{\Iz}, \eta_0+\lambda \Jz t  \Big \} 
\leq  \eta_0+ \frac{\Jz}{\Iz} + \lambda \Jz t, \quad \eta_0:=\max_{v\in \Pt(0)}|v|.
\end{equation}
\end{remark}

%
%
\subsection{Global existence of classical solutions}\label{subsec:kinetic-global}
In  this section  we develop  a global existence theory for
classical solutions of the Vlasov-type flocking equation, 

\begin{subequations}\label{eqs:E1-2}
\begin{align} \label{E1}
\displaystyle & \partial_t f + v\cdot\nablax  f + \lambda \nablav\cdot \big(f\Qf \big)=0, \quad x, v \in \bbr^d, t > 0, \\
\displaystyle \Qf (x,v,t) &= \int_{\bbr^{2d}} r(x,y) (\vst - v) f(y,\vst,t) d\vst dy, \quad r(x,y) = \frac{\rA}{(1 + |x-y|^2)^{\beta}},
\end{align}
subject to initial datum 
\begin{align}\label{E2}
f(x,v,0)  = \fin (x,v).
\end{align}
\end{subequations}

We begin by noting that the kinetic solution $f$ remains \emph{uniformly bounded}. 
To this end,  rewrite
the mean-field model \eqref{eqs:E1-2} in a 'non-conservative' form,
\begin{equation} \label{D-4}
\displaystyle \partial_t f + v \cdot \nabla_x f +  \lambda \Qf\cdot\nablav f 
 =  -\lambda f \nablav\cdot \Qf  ,
\quad x, v \in \bbr^d, t>0.
\end{equation}
Consider the particle trajectories, 
$[x(t), v(t)] \equiv [x(t;t_0,x_0,v_0), v(t;t_0,x_0,v_0)]$, passing through
$(x_0,v_0)\in \bbr^d \times \bbr^d$ at time $t_0\in \bbr_+$,
\begin{equation} \label{D-5}
\displaystyle \frac{d}{dt} x(t) = v(t), \qquad
\frac{d}{dt} v(t) = \lambda\Qvf{f(x(t),v(t),t)}.
\end{equation}
Noting that $-\nablav\cdot \Qf  =  d  \int_{\bbr^{2d}} r(x,y) f(y,\vst,t) d\vst dy$,  
we find
\[ 
\displaystyle \|\nablav\cdot \Qf \|_{L^{\infty}_{x,v}} \leq d \rA\|f\|_{L^1_{x,v}} = d \rA \Iz,  
\]
which implies that the following inequality holds along the particle trajectories,
\[
\displaystyle \frac{d}{dt} f(x(t),v(t),t) \leq \lambda d \rA \Iz f(x(t),v(t),t).
\]
It follows that as long as initial data $\fin$ has a finite mass,
there will be no finite time blow-up for $f(\cdot,t)$,
\begin{equation} \label{D-5-1}
\displaystyle \|f(t)\|_{L^{\infty}_{x,v}} \leq e^{\lambda d\rA
\Iz t} \|\fin \|_{L^{\infty}_{x,v}}.
\end{equation}
 
\noindent
Next, we turn to study the \emph{smoothness} of $f(\cdot,t)$.
Since the local existence theory will be followed from the standard fixed point argument, e.g.,  \cite{B-D-P}, we only obtain a priori $C^1$-norm bound of $f$ to conclude a global existence of classical solutions.

%
%
\begin{theorem}\label{thm:5-1}
Consider the flocking kinetic model (\ref{eqs:E1-2}). Suppose that the initial datum $\fin  \in
(C^1 \cap W^{1, \infty})(\bbr^{2d})$ satisfies
\begin{subequations}\label{eqs:fin}
\begin{enumerate}
\item Initial datum is compactly supported in the phase space, $supp_{(x,v)}\fin(\cdot)$ is bounded, and in particular, $\Pt(0) \subset B_{\eta_0}(0)$. 
\item Initial datum is $C^1$-regular and bounded:
\[ \displaystyle \sum_{0 \leq |\alpha|+ |\beta| \leq 1} \|\nabla_{x}^{\alpha} \nabla_{v}^{\beta} \fin \|_{L^{\infty}_{x,v}} < \infty. \]
\end{enumerate}
\end{subequations}
Then, for any $T \in (0, \infty)$, there exists a unique
classical solution $f \in C^1([0, T) \times \bbr^{2d})$.
\end{theorem}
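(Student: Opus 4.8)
The plan is to combine the local well-posedness granted by the standard fixed-point argument of \cite{B-D-P} with uniform-in-time \emph{a priori} $C^1$ bounds on $f$ over an arbitrary finite interval $[0,T]$; a routine continuation argument then upgrades the local solution to a global one. The zeroth-order ingredients are already in hand: the bound \eqref{D-5-1} controls $\|f(t)\|_{L^\infty_{x,v}}$ by $e^{\lambda d\rA\Iz t}\|\fin\|_{L^\infty_{x,v}}$, while Lemma \ref{lem:4-2} together with Remark \ref{rem:4-2} shows that $\Pt(t)$ stays inside a ball whose radius grows at most linearly in $t$. Since $\dot x(t)=v(t)$ is then bounded on $[0,T]$, the $x$-support of $f(\cdot,t)$ also grows at most linearly, so $f(\cdot,t)$ remains compactly supported with a finite, $T$-dependent phase-space support throughout $[0,T]$.

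The heart of the matter is to propagate the gradient bounds. I would differentiate the non-conservative form \eqref{D-4} with respect to $x_k$ and $v_k$ and read the resulting equations along the characteristics \eqref{D-5}. For $D=\partial_{x_k}$ one obtains, schematically,
\[
\frac{d}{dt}(Df) = -\lambda(\nablav\cdot\Qf)(Df) - \lambda(D\Qf)\cdot\nablav f - \lambda f\, D(\nablav\cdot\Qf),
\]
while for $D=\partial_{v_k}$ the transport term $v\cdot\nablax f$ produces an extra coupling $-\partial_{x_k}f$,
\[
\frac{d}{dt}(Df) = -\partial_{x_k}f -\lambda(\nablav\cdot\Qf)(Df) - \lambda(D\Qf)\cdot\nablav f,
\]
the final inhomogeneity being absent here because $\nablav\cdot\Qf$ is independent of $v$, so $\partial_{v_k}(\nablav\cdot\Qf)=0$.

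The decisive observation --- and the step I expect to be the only real obstacle --- is that \emph{none} of the source terms costs a derivative of $f$. Indeed $\Qf$ depends on $f$ only through velocity-averaged integrals against the kernel $r$, so every derivative falling on $\Qf$ lands either on $r$ or produces a $v$-moment of $f$ itself, never on $\nabla f$. Concretely, $\nablav\cdot\Qf = -d\int_{\bbr^{2d}} r(x,y)f(y,\vst)\,d\vst\,dy$ and the entries of $\partial_{v_j}\Qf$ are bounded by $d\rA\Iz$ and $\rA\Iz$; the kernel $r(x,y)=\rA(1+|x-y|^2)^{-\beta}$ satisfies $\|\nablax r\|_{L^\infty}\le \rA\beta$, whence $\|\nablax\Qf(\cdot,t)\|_{L^\infty}$ and $\|\nablax(\nablav\cdot\Qf)(\cdot,t)\|_{L^\infty}$ are controlled by $\rA\beta\Iz$ times the (finite, linearly growing) velocity spread on $\Pt(t)$. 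Thus the right-hand sides are affine in $(\nablax f,\nablav f)$, with coefficients and inhomogeneities bounded by continuous functions of $t$ on $[0,T]$, using also \eqref{D-5-1} for the $\lambda f\,D(\nablav\cdot\Qf)$ term.

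Setting $N(t):=\|\nablax f(\cdot,t)\|_{L^\infty_{x,v}}+\|\nablav f(\cdot,t)\|_{L^\infty_{x,v}}$ and taking suprema over characteristics then yields a closed differential inequality $\frac{d}{dt}N(t)\le C_1(T)N(t)+C_2(T)$, so Gronwall's lemma gives $N(t)<\infty$ uniformly on $[0,T]$. Combined with the $L^\infty$ and compact-support bounds, this supplies the uniform $C^1\cap W^{1,\infty}$ control that rules out finite-time blow-up, and the local solution extends to a unique classical solution $f\in C^1([0,T)\times\bbr^{2d})$ for every $T\in(0,\infty)$.
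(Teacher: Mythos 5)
Your proposal is correct and takes essentially the same route as the paper: local existence via the fixed-point argument of \cite{B-D-P}, then a priori $W^{1,\infty}$ bounds obtained by differentiating the non-conservative equation \eqref{D-4} along characteristics, using that $\Qf$ is an averaging operator (so no derivative of $f$ is lost, and $\partial_{v_i}\nablav\cdot\Qf=0$) together with the linear-in-time velocity-support bound of Lemma \ref{lem:4-2}/Remark \ref{rem:4-2}, closed by Gronwall and a continuation principle. The only cosmetic difference is that the paper collects $f$ and its first derivatives into one functional ${\mathcal F}(t)$ satisfying $\frac{d}{dt}{\mathcal F}\lesssim(\eta(t)+1){\mathcal F}$, whereas you track the gradient norm separately with an inhomogeneous Gronwall inequality on a fixed interval $[0,T]$.
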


\begin{proof} 
We  express the non-conservative kinetic model (\ref{D-4}) in terms of the nonlinear transport operator $\partial_t + v \cdot \nabla_x  + \lambda\Qf  \cdot \nabla_v$,
\begin{equation}\label{eq:non} 
{\mathcal T}f = -\Nvareps f\nablav\cdot \Qf, \qquad {\mathcal T}:= \partial_t + v \cdot \nabla_x  + \lambda\Qf  \cdot \nabla_v.
\end{equation}

We claim that there exist (possibly different)  positive constants, $C = C(d, \Nvareps,\Iz, \Jz) > 0$, such that 
\begin{subequations}\label{eqs:abc}
\begin{eqnarray}
\displaystyle & &|{\mathcal T}(f)| \leq C |f|, \label{eq:abc-a}\\
\displaystyle & & |{\mathcal T}(\partial_{x_i} f)| \leq C \Big( |f| +  (\eta(t) + 1) |\nabla_v f| + |\partial_{x_i} f|  \Big), \qquad \eta(t):=\max_{v\in \Pt(t)}|v|, \label{eq:abc-b}\\
\displaystyle & & |{\mathcal T}(\partial_{v_i} f)| \leq C \Big( |\partial_{x_i} f| + |\nabla_v f| \Big). \label{eq:abc-c}
\end{eqnarray}
\end{subequations}

To verify these inequalities, observe that by (\ref{eq:non})
\[
\displaystyle {\mathcal T}(f) = -\Nvareps f\nablav\cdot \Qf  
= \Nvareps d \int_{\bbr^{2d}} r(x,y) f(y,\vst,t)d\vst dy
\]
and (\ref{eq:abc-a}) follows with $C:= \Nvareps d \rA \Iz \geq \Nvareps\|\nablav\cdot\Qf (t)\|_{L^{\infty}_{x,v}}$.

\smallskip\noindent
Next, differentiating (\ref{eq:non}) we obtain
\[
\displaystyle {\mathcal T}(\partial_{x_i} f) =
-\Nvareps(\partial_{x_i} \Qf ) \cdot \nabla_v f - \Nvareps(\partial_{x_i}
\nablav\cdot \Qf ) f - \Nvareps(\nablav\cdot \Qf ) \partial_{x_i} f.
\]
Straightforward calculation yields
\[
\partial_{x_i} \Qf  =  -\int_{\bbr^{2d}} \frac{2 \beta\rA( x_i - y_i)}{ (1 + |x-y|^2)^{\beta +1}}(\vst-v) f(y,\vst,t) d\vst dy,
\]
and since the variation of the relevant kinetic velocities at time $t$ does not exceed 
$|v-\vst|\leq 2\eta(t)$, we find $\|\partial_{x_i} \Qf (t)\| \leq 4 \beta\rA  \eta(t) \Iz$;
similarly,
\[
\partial_{x_i} \nablav\cdot
\Qf  =  \int_{\bbr^{2d}} \frac{2 \beta\rA d ( x_i - y_i)}{ (1 + |x-y|^2)^{\beta +1}}f(y,\vst,t) d\vst dy \ \ \mapsto \ \ \|\partial_{x_i} \nablav\cdot\Qf (t)\|_{L^{\infty}_{x,v}} \leq 2 \beta d\rA  \Iz.
\]
We conclude that (\ref{eq:abc-b}) holds with, say,  $C= \Nvareps d\rA \mathcal{M}_0 (1+2\beta+ 4\beta\eta(t))$.

\smallskip\noindent
Finally, we differentiate (\ref{eq:non}) with respect to $v_i$ (noting that $\partial_{v_i} \nablav\cdot \Qf  = 0$)
\[
{\mathcal T}(\partial_{v_i} f) =
-\Nvareps\partial_{x_i} f - \Nvareps(\partial_{v_i} \Qf ) \cdot \nabla_v f 
-\Nvareps(\nablav\cdot \Qf )\partial_{v_i} f;
\]
Straightforward calculation then yields,
\[
\partial_{v_i} \Qf  = -\int_{\bbr^{2d}} r(x,y) f(y,\vst,t) d\vst dy \ \ 
\mapsto \ \ \|\partial_{v_i} \Qf (t)\|_{L^{\infty}_{x,v}} \leq \Iz,
\]
and (\ref{eq:abc-c}) follows with $C=\Nvareps+\Nvareps(d+1)\rA{\mathcal M}_0$.

Now, let ${\mathcal F}(t)$ measure the $W^{1,\infty}$-norm of $f(\cdot,t)$
\[
\displaystyle  {\mathcal F}(t) := \sum_{0 \leq |\alpha|+ |\beta| \leq 1} \|\nabla_{x}^{\alpha} \nabla_{v}^{\beta} f(t)\|_{L^{\infty}_{x,v}}. 
\]
The inequalities (\ref{eqs:abc}) imply 
\[ 
\displaystyle  \frac{d}{dt} {\mathcal F}(t) \lesssim   \big(\eta(t)+1\big)  {\mathcal F}(t). 
\]
Lemma \ref{lem:4-2} (see remark \ref{rem:4-2}), tells us that $\eta(t) \lesssim \eta_0 + t$, and we end up with the energy bound
\[ 
\displaystyle {\mathcal F}(t) \leq {\mathcal F}(0) e^{C(t + t^2)}, \qquad C=C(\eta_0,\Iz, \Jz,\beta, d, \rA). 
\]
Equipped  with this a priori $W^{1,\infty}$ estimate, standard continuation principle yields a global extension of local classical solutions.
\end{proof}

\begin{remarks}
\begin{enumerate}
\item The above a priori estimate need not be optimal. Since we used a rough estimate 
(\ref{eq:yyz}) for the size of $\Pt(t)$,
\[ 
\displaystyle    \max_{v \in \Pt(t)} |v| \lesssim \eta_0 + t, 
\]
we end with the quadratic exponential growth, $e^{C( t + t^2)}$. An optimal bound, however,  could  be $e^{Ct}$. Of course, one cannot expect a uniform bound for $C^1$-norm, because the one-particle distribution function may grow exponentially along  the particle trajectory (see \eqref{D-5-1}). 
\item The global existence of classical solution can be improved for more general kernels. 
\item For related works on kinetic granular type dissipative systems, we refer to \cite{Ja1, Ja2, J-P}.
\end{enumerate}
\end{remarks}
%
%
\section{Time-asymptotic behavior of kinetic flocking}\label{sec:kinetic-time}
\setcounter{equation}{0} 
In this section, we present the time-asymptotic flocking behavior of 
the kinetic model for flocking (\ref{eqs:E1-2}). As in the case with particle description  discussed in Section \ref{sec:CS}, we will show that the velocity of particles will contracted
 to the mean bulk velocity $u_c$, which corresponds to the velocity at the center of mass:
\[ 
u_c(t):=\frac{1}{\mathcal{M}_0}\int_{\bbr^{2d}}vf(x,v,t)dv dx, \qquad u_c(t) \equiv u_c(0). 
\]

We recall that the energy decay in  \eqref{eq:moremomb} 
\begin{equation}\label{eq:m2decay}
\frac{d}{dt}\mathcal{M}_2(t) \leq 0.
\end{equation}
We note that unlike granular flows, for example, e.g. \cite{B-D-P}, the energy decay 
(\ref{eq:m2decay}) does \emph{not} drive the energy to zero: if the initial momentum
${\mathcal M}_1 \not = 0$, then the kinetic energy $\mathcal{M}_2(t)$ has a nonzero lower bound, in analogy with the discrete case, consult remark \ref{rem:xyz}. Indeed, since the total mass and momentum are conserved,  ${\mathcal M}_i(t) \equiv {\mathcal M}_i, \, i=0,1$, (\ref{eq:moremomb}) implies
\begin{eqnarray*}
\displaystyle \frac{d}{dt} {\mathcal M}_2(t)  \geq  -\rA
\int_{\bbr^{2d}} |v-\vst|^2 f(x,v) f(y,\vst) d\vst dy dv dx 
                                    =  	-2\rA {\mathcal M}_0 {\mathcal M}_2(t) 
				        +2\rA|{\mathcal M}_1|^2,
\end{eqnarray*}
 and  Gronwall's lemma yields the following kinetic analog of  \eqref{eq:B5c},\eqref{eq:CSinq}
\begin{equation}\label{eq:B5mac}
\displaystyle {\mathcal M}_2(t) \geq {\mathcal M}_2(0) e^{-2\rA \Iz t} +
\frac{|{\mathcal M}_1|^2}{{\mathcal M}_0} ( 1 - e^{-2\rA\Iz t} ) \geq \frac{|{\mathcal M}_1|^2}{{\mathcal M}_0}.
\end{equation}

Thus, energy decay by itself does not assert flocking. As in the particle description, the emergence of the time-asymptotic flocking behavior depends on the  sufficiently slow decay rate of  the interparticle interaction $\varphi(s)=r(x(s),y(s))$. To this end,  
we let $\Rt$ denote the $x$-projection of $\mathrm{supp} f(\cdot,t)$
\begin{equation}\label{eq:Rt}
\Rt(t) := \{ x \in \bbr^d: \exists~~(x,v) \in \bbr^{2d} \quad \mbox{ such that } \quad f(x,v,t) \not = 0 \}, 
\end{equation}
and  denote its initial size, $\zeta_0:=\max_{x\in \Rt_0}|x|$.

\begin{lemma}\label{lem:Tdecay}
Let $f$ be a global classical solution to \eqref{eqs:E1-2}. Then, there exists a $\kappa_3 >0$ such that $\varphi(t)=\inf_{(x,y)\in \Rt(t)} r(x,y)$ satisfies
\begin{equation}\label{eq:Td} 
\displaystyle \varphi(t) \geq \rA\kappa_3^{-\beta} ( 1 + t^2 + t^4)^{-\beta}. 
\end{equation}
 The constant $\kappa_3$ is given by
\[ \displaystyle \kappa_3:= \max \Big \{ 1 + 12 \zeta^2_0, 12 \Big(\eta_0 + \frac{\Jz}{\Iz} \Big)^2,  3 \lambda^2 \Jz^2  \Big \}. 
\]
\end{lemma}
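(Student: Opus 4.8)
The plan is to reduce the desired lower bound on $\varphi(t)$ to an upper bound on the diameter of the spatial support $\Rt(t)$, and then to control that diameter through the growth of the particle trajectories. Since the kernel $r(x,y)=\rA(1+|x-y|^2)^{-\beta}$ is a decreasing function of the separation $|x-y|$, its infimum over $\Rt(t)\times\Rt(t)$ is attained at the largest separation, so
\[
\varphi(t) = \frac{\rA}{\big(1 + D(t)^2\big)^{\beta}}, \qquad D(t):=\sup_{x,y\in\Rt(t)}|x-y|.
\]
Consequently the claimed estimate \eqref{eq:Td} is equivalent to the polynomial growth bound $1 + D(t)^2 \le \kappa_3(1 + t^2 + t^4)$, and it suffices to estimate $D(t)$.

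First I would pass from the diameter to the maximal position. Writing $\zeta(t):=\max_{x\in\Rt(t)}|x|$, the elementary inequality $|x-y|^2 \le 2(|x|^2+|y|^2)$ gives $D(t)^2 \le 4\zeta(t)^2$, so the whole problem reduces to estimating $\zeta(t)$. To this end I would integrate the trajectory equation $\tfrac{d}{dt}x(t)=v(t)$ from \eqref{D-5}: for a characteristic issued from $\mathrm{supp}_{(x,v)}\fin$,
\[
|x(t)| \le |x(0)| + \int_0^t |v(s)|\,ds \le \zeta_0 + \int_0^t |v(s)|\,ds.
\]
The key input here is the a priori velocity bound of Lemma \ref{lem:4-2}, in the form recorded in \eqref{eq:yyz} of Remark \ref{rem:4-2}, namely $|v(s)| \le \eta_0 + \tfrac{\Jz}{\Iz} + \lambda \Jz s$. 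Integrating this linear-in-$s$ estimate yields
\[
\zeta(t) \le \zeta_0 + \Big(\eta_0 + \frac{\Jz}{\Iz}\Big) t + \frac{\lambda \Jz}{2}\, t^2.
\]

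Finally I would square this trinomial using $(a+b+c)^2 \le 3(a^2+b^2+c^2)$ and collect powers of $t$:
\[
1 + 4\zeta(t)^2 \le (1 + 12\zeta_0^2) + 12\Big(\eta_0 + \frac{\Jz}{\Iz}\Big)^2 t^2 + 3\lambda^2\Jz^2\, t^4.
\]
Bounding each of the three grouped coefficients from above by $\kappa_3 = \max\{\,1+12\zeta_0^2,\ 12(\eta_0+\Jz/\Iz)^2,\ 3\lambda^2\Jz^2\,\}$ gives $1+D(t)^2 \le 1 + 4\zeta(t)^2 \le \kappa_3(1+t^2+t^4)$, which is precisely \eqref{eq:Td}. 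The argument is a direct chain of estimates with no genuine analytic obstacle; the only care required is the bookkeeping of constants so that the three collected coefficients match the definition of $\kappa_3$, together with the (slightly lossy) step of using the componentwise velocity bound of \eqref{eq:yyz} as a bound on $|v(s)|$.
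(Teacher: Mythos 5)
Your proposal is correct and follows essentially the same route as the paper: the trajectory bound $|x(t)|\le \zeta_0 + (\eta_0 + \Jz/\Iz)t + \tfrac{\lambda\Jz}{2}t^2$ from the velocity estimate of Remark \ref{rem:4-2}, then $|x-y|^2\le 2(|x|^2+|y|^2)$ together with $(a+b+c)^2\le 3(a^2+b^2+c^2)$ to reach $\kappa_3(1+t^2+t^4)$ and hence \eqref{eq:Td}. Even the mild componentwise-versus-Euclidean-norm looseness you flag is present in the paper's own proof, so the two arguments match step for step.
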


\begin{proof} Let $(x,v) \in \mbox{supp}_{x,v} \fin $. 
It follows from remark \ref{rem:4-2} that
\begin{equation}
\displaystyle   |v_i(t)|  \leq \eta_0 + \frac{\Jz}{\Iz} + \lambda \Jz t,
\end{equation}
and hence
\begin{eqnarray*}
\displaystyle |x_i(t;0,x,v)| &\leq& |x_i| + \int_0^t |v_i(s;0,x,v)| ds 
\leq \zeta_0 + \Big( \eta_0 +  \frac{\Jz}{\Iz} \Big) t + \frac{\lambda \Jz t^2}{2}.
\end{eqnarray*}
This gives an estimate on the size of $x$-support $\Rt(t)$ of $f$. For $x, y \in \Rt(t)$,
\begin{eqnarray*}
\zeta(t) \leq 1 + |x - y|^2 &\leq& 1 +  2 (|x|^2 + |y|^2) \cr
          &\leq& 1 +  4 \Big[ \zeta_0 + \Big( \eta_0 + \frac{\Jz}{\Iz} \Big) t + \frac{\lambda \Jz t^2}{2} \Big]^2 \cr
          &\leq& 1 + 12 \zeta_0^2 + 12 \Big(\eta_0 + \frac{\Jz}{\Iz}\Big)^2 t^2 + 3 \lambda^2 \Jz^2 t^4 \cr
          &\leq& \kappa_3 (1 + t^2 + t^4),
\end{eqnarray*}
and (\ref{eq:Td}) follows, $\varphi(t) \geq r(\zeta(t))$.
\end{proof}

Let $v-u_c$ denote the fluctuation (or peculiar) kinetic velocity. We will quantify the emergence of the time-asymptotic flocking behavior in term of the corresponding energy fluctuation 
\begin{equation} \label{TE1}
\displaystyle \Lambda[f(t)] := \int_{\bbr^{4d}} |v-u_c|^2 f(x,v,t)dv dx.
\end{equation}
The time-evolution estimate of $\Lambda[f(t)]$, will depends on the decay rate of $\varphi(t)$.

Let $f$ be a classical solution of  \eqref{eqs:E1-2} with compact support in $x$ and $v$. Direct calculation implies
\begin{align}
 \frac{d}{dt} \Lambda[f(t)] &= \int_{\bbr^{2d}} |v - u_c|^2 \partial_t f(x,v) dv dx \cr
&= -\int_{\bbr^{2d}} |v - u_c|^2 v\cdot\nablax f dv dx 
- \lambda \int_{\bbr^{2d}} |v - u_c|^2 \nablav\cdot \Col dv dx=:\mathcal{I}_1+\mathcal{I}_2. \nonumber 
\end{align}
The first term on the right vanishes by the divergence theorem 
\[
\displaystyle  \mathcal{I}_1= -\int_{\bbr^{2d}} |v - u_c|^2 v\cdot\nablax f dv dx= -\int_{\bbr^{2d}} \mbox{div}_x \Big( |v - u_c|^2 v f \Big) dv dx = 0.
\]
The second  term is simplified as follows.
\begin{align}
\displaystyle &  \mathcal{I}_2= 2 \lambda \int_{\bbr^{2d}}  (v - u_c) \cdot \Col  dv dx \nonumber \\
\displaystyle & \hspace{2cm} = -2 \lambda \int r(x, y) (v - u_c) \cdot (v - \vst) f(y,\vst) f(x,v) d\vst dv dy dx
 \nonumber \\
\displaystyle & \hspace{2cm} = -2 \lambda \int r(x,y) v \cdot (v - \vst) f(y,\vst) f(x,v) d\vst dv dy dx \nonumber \\
\displaystyle & \hspace{2cm} = - \lambda \int r(x, y) |v - \vst|^2 f(y,\vst) f(x,v)d\vst dv dy dx. \nonumber
\end{align}
We summarize the  last three equalities in the following lemma.

\begin{lemma}\label{lem:yzt}
Let $f$ be a classical solution of (\ref{eqs:E1-2}) subject to compactly supported initial conditions $\fin$. Then, the decay of the energy  functional $\Lambda[f(t)]$ in (\ref{TE1}) is governed by  
\begin{equation}\label{eq:yzt}
\frac{d}{dt} \Lambda[f(t)] = - \lambda \int r(x, y) |v - \vst|^2 f(y,\vst) f(x,v)d\vst dv dy dx.
\end{equation}
\end{lemma}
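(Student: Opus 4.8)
The plan is to differentiate $\Lambda[f(t)]$ directly and substitute the equation of motion. Because the bulk velocity is conserved, $u_c(t)\equiv u_c(0)$, the weight $|v-u_c|^2$ carries no time dependence, so I may bring the time derivative under the integral and write $\frac{d}{dt}\Lambda[f(t)] = \int_{\bbr^{2d}} |v-u_c|^2\,\partial_t f\,dv\,dx$. Inserting the Vlasov equation \eqref{E1} in the form $\partial_t f = -v\cdot\nablax f - \lambda\,\nablav\cdot\Col$ (recall $f\Qf=\Col$) splits the right-hand side into a transport contribution $\mathcal{I}_1$ and an interaction contribution $\mathcal{I}_2$, which I handle in turn. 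Throughout, the compact support of $f(\cdot,t)$ in both $x$ and $v$ --- propagated for all time by Lemma \ref{lem:4-2} and the global existence of Theorem \ref{thm:5-1} --- is what justifies the integrations by parts and the vanishing of every boundary term.

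For the transport part, the weight $|v-u_c|^2\,v$ depends only on $v$, so $|v-u_c|^2\,v\cdot\nablax f = \mathrm{div}_x\big(|v-u_c|^2\,v\,f\big)$ is an exact $x$-divergence; integrating in $x$ against the compactly supported $f$ gives $\mathcal{I}_1=0$ immediately.

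The substance lies in $\mathcal{I}_2 = -\lambda\int_{\bbr^{2d}} |v-u_c|^2\,\nablav\cdot\Col\,dv\,dx$. Integrating by parts in $v$ and using $\nabla_v|v-u_c|^2 = 2(v-u_c)$ yields $\mathcal{I}_2 = 2\lambda\int_{\bbr^{2d}}(v-u_c)\cdot\Col\,dv\,dx$. Expanding $\Col$ through its defining double integral \eqref{eq:D-1a}, the problem reduces to simplifying a bilinear form in $f\otimes f$ weighted by $r(x,y)(\vst-v)$. The key maneuver, and the step I expect to demand the most care, is a symmetrization under the interchange $(x,v)\leftrightarrow(y,\vst)$: since $r(x,y)=r(y,x)$ is symmetric while the factor $(\vst-v)$ is antisymmetric under the swap, the constant piece $u_c\cdot(\vst-v)$ integrates to zero, so $u_c$ drops out and $(v-u_c)$ may be replaced by $v$. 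Applying the same swap once more to the remaining $\int r(x,y)\,v\cdot(\vst-v)\,f f$ and averaging it with its relabeled copy produces the manifestly symmetric, sign-definite quadratic form $-\lambda\int r(x,y)\,|v-\vst|^2 f(y,\vst)f(x,v)\,d\vst\,dv\,dy\,dx$. Combining $\mathcal{I}_1=0$ with this evaluation of $\mathcal{I}_2$ is precisely \eqref{eq:yzt}. The only genuine subtlety is to confirm that each symmetrization is legitimate, i.e. that the integrands are absolutely integrable so that Fubini and the relabeling of dummy variables apply --- which once again follows from the compact support of $f$.
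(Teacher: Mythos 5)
Your proposal is correct and follows essentially the same route as the paper's own derivation: differentiation under the integral (using that $u_c$ is constant), the split into the transport term $\mathcal{I}_1$ (which vanishes as an exact $x$-divergence) and the interaction term $\mathcal{I}_2$, integration by parts in $v$, and then the two symmetrizations under $(x,v)\leftrightarrow(y,\vst)$ that first eliminate $u_c$ and then produce the sign-definite form $-\lambda\int r(x,y)|v-\vst|^2 f f_*$. The only difference is cosmetic: you make explicit the Fubini/compact-support justification that the paper leaves implicit.
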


Equipped with lemma \ref{lem:yzt} we can state the main result of this section.
\begin{theorem}\label{thm:asy}
Let $f$ be the classical kinetic solution constructed in Theorem \ref{thm:5-1}. Then,  the decay of its energy fluctuations around the mean bulk velocity $u_c$, is given by 

\begin{equation}\label{eq:asy}
\Lambda[f(t)] \lesssim \Lambda[\fin ]\times
\left\{%
\begin{array}{ll}
   C_3 e^{-\kappa_4 t^{1-4\beta}}, & 0 \leq \beta < \frac{1}{4}, \\ \\
    (1+t)^{-\kappa_5}, & \beta = \frac{1}{4}. \\
\end{array}%
\right. 
\end{equation}
The constants involved are $\kappa_4=1/(3\kappa_3)^\beta(1-4\beta)>0$ and $\kappa_5=2\lambda\rA/\sqrt[4]{3\kappa_3}>0$.
\end{theorem}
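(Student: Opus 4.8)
The plan is to turn the exact dissipation identity of Lemma~\ref{lem:yzt} into a \emph{closed} Gronwall inequality for $\Lambda[f(t)]$ alone, and then feed in the slow-decay lower bound for the interaction $\varphi$ supplied by Lemma~\ref{lem:Tdecay}. The whole argument is driven by one algebraic observation followed by scalar ODE and integral estimates.

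First I would rewrite the bilinear dissipation on the right of \eqref{eq:yzt} purely in terms of $\Lambda[f(t)]$. Decomposing $v-\vst=(v-u_c)-(\vst-u_c)$ and expanding the square, the cross term integrates to $-2\big(\int(v-u_c)f\,dvdx\big)\cdot\big(\int(\vst-u_c)f\,d\vst dy\big)$, which vanishes since $\int(v-u_c)f\,dvdx=\mathcal{M}_1-u_c\mathcal{M}_0=0$ by the definition of $u_c$ together with conservation of momentum \eqref{eq:Mii}. Each of the two remaining square terms contributes $\mathcal{M}_0\Lambda[f(t)]$. Since $f(\cdot,t)$ has compact $x$-support (Lemma~\ref{lem:Tdecay}, via Remark~\ref{rem:4-2}), I may replace $r(x,y)$ by its minimal value $\varphi(t)$ over that support, and conclude that the dissipation integral is bounded below by $2\mathcal{M}_0\varphi(t)\Lambda[f(t)]$. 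This closes \eqref{eq:yzt} into the autonomous differential inequality $\frac{d}{dt}\Lambda[f(t)]\le -2\lambda\mathcal{M}_0\varphi(t)\Lambda[f(t)]$, so Gronwall's lemma yields $\Lambda[f(t)]\le \Lambda[\fin]\,e^{-2\lambda\mathcal{M}_0\Phi(t)}$ with $\Phi(t)=\int_0^t\varphi(s)\,ds$.

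Next I would insert the bound $\varphi(t)\ge \rA\kappa_3^{-\beta}(1+t^2+t^4)^{-\beta}$ from \eqref{eq:Td}. Using the elementary inequality $1+s^2+s^4\le 3(1+s)^4$ gives $\varphi(s)\ge \rA(3\kappa_3)^{-\beta}(1+s)^{-4\beta}$, so the growth of $\Phi$ is controlled by the scalar integral $\int_0^t(1+s)^{-4\beta}\,ds$, and the dichotomy in \eqref{eq:asy} is exactly the integrability threshold $4\beta=1$. For $0\le\beta<\frac14$ one has $\Phi(t)\gtrsim (1+t)^{1-4\beta}$, which feeds back through the Gronwall bound to give the stretched-exponential rate $e^{-\kappa_4 t^{1-4\beta}}$, the finite prefactor $e^{(\cdots)/(1-4\beta)}$ and the gap between $(1+t)^{1-4\beta}$ and $t^{1-4\beta}$ being absorbed into $C_3$. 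For $\beta=\frac14$ one has $\Phi(t)\gtrsim \ln(1+t)$, turning $e^{-2\lambda\mathcal{M}_0\Phi(t)}$ into the algebraic rate $(1+t)^{-\kappa_5}$. Matching the exponents reproduces $\kappa_4$ and $\kappa_5$ as stated.

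I expect the only genuinely structural step to be the exact collapse of the bilinear dissipation to $2\mathcal{M}_0\varphi(t)\Lambda[f(t)]$; this is what makes the estimate \emph{self-contained} in $\Lambda$ and is entirely analogous to the particle computation $\sum|v_i-v_j|^2=2N\EV(t)$ used in Lemma~\ref{lem:2-1}. Once that identity is in hand, the remainder is a one-line Gronwall argument followed by routine scalar integral bounds and constant bookkeeping in the two regimes. The one point requiring care is the legitimacy of replacing $r(x,y)$ by $\varphi(t)$ over the support of $f$, which rests on the finite-support propagation established in Lemma~\ref{lem:Tdecay} and Remark~\ref{rem:4-2}.
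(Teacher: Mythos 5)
Your proposal is correct and follows essentially the same route as the paper: the dissipation identity of Lemma~\ref{lem:yzt}, the decomposition $|v-\vst|^2=|v-u_c|^2+|\vst-u_c|^2\pm 2(v-u_c)\cdot(\vst-u_c)$ with vanishing cross term, replacement of $r$ by $\varphi(t)$ over the compact support, Gronwall, and then the lower bound of Lemma~\ref{lem:Tdecay} with the dichotomy at $4\beta=1$. The only (harmless) differences are bookkeeping: you retain the factor $\mathcal{M}_0$ in the Gronwall exponent (which the paper silently drops in its display \eqref{E5}) and you use $1+s^2+s^4\le 3(1+s)^4$ globally where the paper restricts to $t\ge 1$ and uses $1+t^2+t^4\le 3t^4$.
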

\begin{proof}
Lemma \ref{lem:yzt} implies
\begin{align}
\frac{d}{dt} \Lambda[f(t)] & =  - \lambda \int_{\bbr^{4d}} r(x, y) |v - \vst|^2 f(y,\vst) f(x,v)d\vst dv dy dx \nonumber \\
& \leq -  \lambda \varphi(t) \int_{\bbr^{4d}} |v - \vst|^2 f(y,\vst) f(x,v)d\vst dv dy dx. 
  = -2 \lambda \varphi(t) \Iz \Lambda[f(t)]. \nonumber
\end{align}
As before, the identity $|v-\vst|^2= |v-u_c|^2 +|\vst-u_c|^2 + 2(v-u_c)\cdot(\vst-u_c)$ induces the corresponding decomposition of the integrand on the right. Noting that 
\[ 
\displaystyle \int_{\bbr^{2d}} (v - u_c) f(x,v,t) dv dx = 0. 
\]
We conclude
\[
\frac{d}{dt} \Lambda[f(t)] \leq  -2 \lambda \varphi(t) \Iz \Lambda[f(t)]
\]
and Gronwall's integration yields
\begin{equation} \label{E5}
\displaystyle  \Lambda[f(t)] \leq  \Lambda[\fin ] e^{- 2 \lambda \Phi(t)}, \qquad 
\Phi(t)=\int^t_0\varphi(s)ds.
\end{equation}
We distinguish between two cases.

\medskip\noindent
{\bf Case 1 }[$0 \leq \beta < \frac{1}{4}$].  According to  Lemma \ref{lem:Tdecay} 
\[ 
\varphi(t) \geq \rA \kappa_3^{-\beta} (1 + t^2 + t^4)^{-\beta} \geq \rA(3 \kappa_4)^{-\beta} t ^{-4 \beta}, \qquad t \geq 1. 
\]
We compute for $t \geq 1$,
\[ 
\displaystyle -\Phi(t) \lesssim  - \int_1^t \varphi(\tau) d\tau \leq - \frac{\rA}{(3 \kappa_4)^{\beta} (1- 4 \beta)} (t^{1-4\beta} - 1)
\]
which yields the first part of (\ref{eq:asy}).

\medskip\noindent
{\bf Case 2 }[$\beta = \frac{1}{4}$]. For $t \geq 1$ we have
\[ 
\displaystyle \exp \Big( -2 \lambda \rA \Phi(t) \Big) \leq  \exp \Big(- \frac{2 \lambda \ln t}{(3 \kappa_4)^{1/4}} \Big) \lesssim 
(1+t)^{-\frac{2\lambda\rA}{\sqrt[4]{3 \kappa_4}}}, 
\]
and the second part of (\ref{eq:asy}) follows.
\end{proof}

%
%
\section{From kinetic to hydrodynamic description of flocking}\label{sec:hydro}
\setcounter{equation}{0}
In this section we discuss  the hydrodynamic description for flocking, which is formally obtained by taking moments of the kinetic model (\ref{eqs:E1-2}) . 

\begin{subequations}\label{ap-1}
\begin{align}
\displaystyle & \partial_t f + v\cdot\nablax f)+\lambda \nablav\cdot \Col=0, \quad x, v \in \bbr^d,~~ t > 0, \\
\displaystyle & \Col(x,v,t) = \int_{\bbr^{2d}} r(x,y) (\vst-v)f(x,v,t)f(y,\vst,t) d\vst dy.
\end{align}
\end{subequations}
We first set hydrodynamic variables: the mass $\rho:= \int_{\bbr^d} f d\vxi$, the 
momentum, $\rho u := \int_{\bbr^d} \vxi f d\vxi$, and the energy,
$\rho E := 1/2\int_{\bbr^d} {|\vxi|^2} f d\vxi$, which is the sum of kinetic and internal energies (corresponding to the first two terms in the decomposition of kinetic velocities $|v|^2=|u|^2+|v-u|^2+2(v-u)\cdot u$)
\begin{equation}\label{eq:En} 
\rho E = \rho e + \frac{1}{2}\rho{|u|^2}, \qquad \rho e:=\frac{1}{2}\int |v-u(x)|^2f(x,v,t)dv.
\end{equation}
For notational simplicity, we suppress time-dependence, denoting
$\rho(x) \equiv \rho(x,t), \ u(x) \equiv u(x,t)$ and $E(x) = E(x,t)$ when the context is clear.

We compute the $v$-moments of (\ref{ap-1}): multiply \eqref{ap-1} against $1, \vxi$ and ${|\vxi|^2}/{2}$  and integrate over the velocity space $\bbr^d$. We end up with the system of equations,
\begin{subequations}\label{ap-2}
\begin{align}
\partial_t \rho + \nabla_x  \cdot (\rho u) &= 0, \label{eq:Mass}\\
\displaystyle \partial_t (\rho u) + \nabla_x \cdot \Big(\rho u \otimes u  + P \Big) &= {\mathcal S}^{(1)}, \label{eq:Momentum}\\
\displaystyle \partial_t (\rho E) + \nabla_x \cdot \Big(\rho E u + P u + q  \Big) &= {\mathcal S}^{(2)}. \label{eq:Energy}
\end{align}
Here, ${\mathcal S}^{(j)}, j=1,2$, are  the nonlocal source terms given by
\begin{eqnarray}
\displaystyle {\mathcal S}^{(1)}(x,t) &:=& -\lambda \int_{\bbr^{d}} r(x,y) (u(x) - u(y)) \rho(x) \rho(y) dy, \label{eq:Sone}\\
\displaystyle {\mathcal S}^{(2)}(x,t) &:=& -\lambda \int_{\bbr^d} r(x,y) \Big[ E(x) + E(y) - u(x) \cdot u(y) \Big] \rho(x) \rho(y) dy, \label{eq:Stwo}
\end{eqnarray}
and $P = (p_{ij}), q= (q_i)$ denote, respectively, the stress
tensor and heat flux vector,
\begin{equation}\label{eq:pandq}
p_{ij} := \int_{\bbr^d} (\vxi_i - u_i) (\vxi_j - u_j) f dv,  \quad q_i
:= \int_{\bbr^d} (\vxi_i - u_i) |\vxi-u|^2 f dv.
\end{equation}
\end{subequations}

\begin{remark}\label{rem:5-1}
The total mass of the source term ${\mathcal S}^{(1)}$ vanishes: exchange of variables
$x\leftrightarrow y$ yields
\begin{subequations}\label{eqs:5-1}
\begin{align}\label{eq:5-1a}
\displaystyle & & \int_{\bbr^d} {\mathcal S}^{(1)}(x,t) dx = 
-\lambda\int_{\bbr^{2d}} r(x,y) (u(x)-u(y))  \rho(x) \rho(y) dxdy = 0. 
\end{align}
The source term ${\mathcal S}^{(2)}$ is non-positive: using (\ref{eq:En}) we find
\begin{align}\label{eq:5-1b}
\displaystyle && {\mathcal S}^{(2)}(t) = -\lambda \int_{\bbr^d} r(x,y) \Big[  \frac{1}{2} |u(x) - u(y)|^2 + e(x) + e(y) \Big] \rho(x) \rho(y) dy \leq 0.
\end{align}
\end{subequations}
\end{remark}

We conclude that the total mass and momentum, $\int\rho(x,t)dx$ and $\int\rho(x,t)u(x,t)dx$, are conserved in time. The total energy, however, 
\[
{\mathcal E}(t)=\int_{\bbr^d}\rho(x,t)E(x,t)dc = \frac{1}{2}\int_{\bbr^{2d}}|v|^2f(x,v,t) dvdx,
\]
is dissipating, which is responsible for the formation of time-asymptotic flocking behavior. We turn to quantify this decay.
We first write  the total energy as the sum of total kinetic and potential energies, corresponding to (\ref{eq:En}),
\[ 
{\mathcal E}(t) = {\mathcal E}_{\mbox{k}}(t) + {\mathcal E}_{\mbox{p}}(t), \quad 
{\mathcal E}_{\mbox{k}}(t):= \frac{1}{2}\int_{\bbr^d} \rho(x,t) |u(x,t)|^2 dx, \quad 
{\mathcal E}_{\mbox{p}}(t) := \frac{1}{2}\int_{\bbr^{2d}} |\vxi - u(x,t)|^2 f d\vxi dx. 
\]

\begin{lemma}\label{lem:fabc}
The time evolution of the total, kinetic and internal energies is governed by 
\begin{subequations}
\begin{eqnarray}
  && \ \ \frac{d}{dt} {\mathcal E}(t) = -\Nvareps \int_{\bbr^{2d}} r(x,y)\Big[\frac{1}{2}|u(x) - u(y)|^2+e(x)+e(y)\Big] \rho(x) \rho(y) dy dx; \label{eq:fabci}\\
   && \ \ \frac{d}{dt}  {\mathcal E}_{\mbox{p}}(t) = 
-\lambda \int_{\bbr^{2d}} \!\!\!\!\!r(x,y) \big(e(x)+e(y)-u(x)\cdot u(y)\big) \rho(x) \rho(y) dy dx -2 \int_{\bbr^d} (\nabla_x \cdot u) \rho e dx; \label{eq:fabcii}\\
 && \ \ \frac{d}{dt}  {\mathcal E}_{\mbox{k}}(t) = -\frac{\lambda}{2} \int_{\bbr^{2d}} r(x,y) \Big(|u(x)|^2+|u(y)|^2\Big) \rho(x) \rho(y) dx dy + 2 \int_{\bbr^{d}} (\nabla_x \cdot u) \rho e dx.\label{eq:fabciii}
\end{eqnarray}
\end{subequations}
\end{lemma}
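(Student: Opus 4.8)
The plan is to reduce everything to the total energy identity \eqref{eq:fabci} together with a single independent computation, exploiting the decomposition $\mathcal{E}(t)=\mathcal{E}_{\mathrm{k}}(t)+\mathcal{E}_{\mathrm{p}}(t)$. First I would establish \eqref{eq:fabci} in the quickest way: since $\mathcal{E}(t)=\tfrac12\mathcal{M}_2(t)$, differentiate and invoke the energy dissipation \eqref{eq:moremomb}, then decompose the relative velocity as $|v-\vst|^2=|v-u(x)|^2+|\vst-u(y)|^2+|u(x)-u(y)|^2$ plus cross terms. Integrating against $f(x,v)f(y,\vst)$ and using $\int_{\bbr^d}(v-u(x))f(x,v)\,dv=0$ annihilates every cross term, leaving exactly $\big[2e(x)+2e(y)+|u(x)-u(y)|^2\big]\rho(x)\rho(y)$ under the integral, which is \eqref{eq:fabci}. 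Equivalently, one integrates the balance law \eqref{eq:Energy} over $x\in\bbr^d$, discards the flux divergence by the compact support of $f$, and quotes \eqref{eq:5-1b} from Remark \ref{rem:5-1}.

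For the internal energy \eqref{eq:fabcii} I would differentiate $\mathcal{E}_{\mathrm{p}}=\tfrac12\int_{\bbr^{2d}}|v-u(x,t)|^2 f\,dv\,dx$ directly at the kinetic level. The term generated by $\partial_t u$ vanishes because $\int(v-u)f\,dv=0$, so $\tfrac{d}{dt}\mathcal{E}_{\mathrm{p}}=\tfrac12\int|v-u(x)|^2\partial_t f\,dv\,dx$. Substituting the Vlasov equation \eqref{ap-1}, I split into a transport part and an interaction part. In the transport part, integrating by parts in $x$ and using $\int(v_i-u_i)(v_j-u_j)f\,dv=p_{ij}$ together with the vanishing first moment of $v-u$ collapses the moment to the stress-work term, which produces the $-2\int(\nabla_x\cdot u)\rho e\,dx$ contribution. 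In the interaction part, integrating by parts in $v$ converts $\int|v-u|^2\nablav\cdot\Col$ into $2\int(v-u)\cdot\Col$; the peculiar-velocity reduction (again using $\int(v-u)f\,dv=0$) yields the nonlocal term, and symmetrizing in $x\leftrightarrow y$ via $r(x,y)=r(y,x)$ casts it into the stated symmetric form.

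Finally I would obtain the kinetic-energy identity \eqref{eq:fabciii} by subtraction, $\tfrac{d}{dt}\mathcal{E}_{\mathrm{k}}=\tfrac{d}{dt}\mathcal{E}-\tfrac{d}{dt}\mathcal{E}_{\mathrm{p}}$, which both supplies the nonlocal interaction term and flips the sign of the $2\int(\nabla_x\cdot u)\rho e$ stress-work term. As an independent check I would re-derive \eqref{eq:fabciii} from the hydrodynamic equations by forming the mechanical-energy balance: dot the momentum equation \eqref{eq:Momentum} with $u$, combine with $\tfrac12|u|^2$ times the mass equation \eqref{eq:Mass}, and integrate. The convective fluxes $\nabla_x\cdot(\rho u\otimes u)$ and $\tfrac12|u|^2\nabla_x\cdot(\rho u)$ cancel after integration by parts, the source $\int u\cdot\mathcal{S}^{(1)}$ (see \eqref{eq:Sone}) delivers the nonlocal contribution, and the pressure work $-\int u\cdot(\nabla_x\cdot P)=\int P:\nabla_x u$ delivers the stress-work term.

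The main obstacle I anticipate is precisely this stress-work term: the transport/pressure computation naturally produces the full tensor contraction $\int P:\nabla_x u\,dx$, and one must argue that it reduces to $2\int(\nabla_x\cdot u)\rho e\,dx$ (immediate in one space dimension, where $P=2\rho e$, and more delicate in several dimensions). The secondary point needing care, though routine, is the justification that all flux-divergence terms integrate to zero; this rests on the compact support of $f$ in $x$ and $v$ furnished by Lemma \ref{lem:4-2} and Theorem \ref{thm:5-1}, which is what legitimates every integration by parts above. The symmetrization of the nonlocal source terms through $r(x,y)=r(y,x)$ is the remaining bookkeeping.
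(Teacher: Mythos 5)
Your proposal follows the paper's proof essentially step for step: \eqref{eq:fabci} by integrating the energy balance \eqref{eq:Energy} over $x$ and invoking \eqref{eq:5-1b}; \eqref{eq:fabcii} by differentiating ${\mathcal E}_{\mbox{p}}$ at the kinetic level, where the $\partial_t u$ contribution (the paper's ${\mathcal K}_1$) vanishes because $\int_{\bbr^d}(v-u)f\,dv=0$, and the remainder splits into a transport part yielding the stress-work term and an interaction part yielding the nonlocal term; and \eqref{eq:fabciii} by subtraction, exactly as in the paper. One substantive remark: the ``obstacle'' you flag is real, and the paper does not actually resolve it --- in its computation of ${\mathcal K}_{21}$ the transport contribution is written as $-2\int_{\bbr^d}(\nabla_x\cdot u)\,\rho e\,dx$, whereas the honest outcome of the integration by parts is the full contraction $-\int_{\bbr^d}\sum_{i,j}p_{ij}\,\partial_{x_j}u_i\,dx$ with the stress tensor $P$ of \eqref{eq:pandq}; the two agree only when $d=1$ (where $P=2\rho e$) or when $P$ is isotropic, since in general only the trace identity $\sum_i p_{ii}=2\rho e$ holds. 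Your caution is therefore warranted: in several dimensions the identities \eqref{eq:fabcii}--\eqref{eq:fabciii} should carry $\int_{\bbr^d}\sum_{i,j}p_{ij}\,\partial_{x_j}u_i\,dx$ in place of $2\int_{\bbr^d}(\nabla_x\cdot u)\,\rho e\,dx$; note this term cancels in the sum of the two identities and never enters \eqref{eq:fabci}, so Theorem \ref{thm:fundamental} and the flocking conclusions are unaffected.
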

\begin{proof}
The equality  (\ref{eq:fabci}) follows  from 
integration of (\ref{eq:Energy}) and invoking (\ref{eq:5-1b}).\newline
For the decay rate of the total internal energy ${\mathcal E}_{\mbox{p}}(t)$ in (\ref{eq:fabcii}), we  calculate
\begin{eqnarray*}
\frac{d}{dt} {\mathcal E}_{\mbox{p}}(t) &=& \int_{\bbr^{2d}} \partial_t \Big( \frac{|\vxi-u|^2}{2} \Big) f dv dx + \int_{\bbr^{2d}}
 \frac{|\vxi-u|^2}{2} \partial_t f dv dx =: {\mathcal K}_1 + {\mathcal K}_2.
\end{eqnarray*}
We estimate ${\mathcal K}_i$ separately. 
The first term, ${\mathcal K}_1$ vanishes, for
\begin{eqnarray*}
{\mathcal K}_1 &=& \int_{\bbr^{2d}} (u -\vxi) \cdot \partial_t u f d\vxi dx 
= \int_{\bbr^{2d}} u \cdot \partial_t u f d\vxi dx - \int_{\bbr^{2d}} \partial_t u \cdot (\vxi f) d\vxi dx \cr
& = &  \int_{\bbr^{d}} u \cdot \partial_t u \rho dx - \int_{\bbr^{d}} \partial_t u \cdot (\rho u) dx = 0.
\end{eqnarray*}
For the second term, ${\mathcal K}_2$, we use the kinetic model to find
\begin{eqnarray*}
{\mathcal K}_2 &=& \frac{1}{2} \int_{\bbr^{2d}} |\vxi-u|^2 \partial_t f d\vxi dx 
               = -\frac{1}{2} \int_{\bbr^{2d}} |\vxi-u|^2 \Big(  \vxi\cdot\nablax f +  \lambda \nablav\cdot \Col \Big) d\vxi dx \cr
               &=& -\frac{1}{2} \int_{\bbr^{2d}} |\vxi -u|^2 \vxi\cdot\nablax f d\vxi dx 
		   +{\lambda} \int_{\bbr^{2d}} (\vxi-u)\cdot\Col d\vxi dx 
		=: {\mathcal K}_{21} + \lambda {\mathcal K}_{22}.
\end{eqnarray*}

The term ${\mathcal K}_{21}$ amounts to
\begin{eqnarray*}
{\mathcal K}_{21} &=& -\frac{1}{2}\int_{\bbr^{2d}} |\vxi-u|^2 \vxi\cdot\nablax f d\vxi dx =\frac{1}{2}\int_{\bbr^{2d}} \Big( \nabla_x |\vxi-u|^2 \Big) \cdot (\vxi f) d\vxi dx \cr
   &=& - \int_{\bbr^{2d}} (\vxi - u) \cdot \Big( (\nabla_x \cdot u) \vxi f \Big) d\vxi dx  = 
-2 \int_{\bbr^d} (\nabla_x \cdot u) (\rho e) dx.
\end{eqnarray*}
A lengthy calculation shows that the remaining term,  ${\mathcal K}_{22}$, equals 
\begin{eqnarray*}
\displaystyle {\mathcal K}_{22} &=&  \int_{\bbr^{4d}} r(x,y) (\vxi - u(x)) \cdot (\vst -v) f(x,\vxi) f(y,\vxi_*) d\vxi_* d\vxi dy dx \cr
\displaystyle &=&  \int_{\bbr^{4d}} r(x,y) \vxi \cdot (\vst -v) f(x,\vxi) f(y,\vxi_*) d\vxi_* d\vxi dy dx  \cr
\displaystyle && - \int_{\bbr^{4d}} r(x,y) u(x) \cdot (\vst-v) f(x,\vxi) f(y,\vxi_*) d\vxi_* d\vxi dy dx \cr
\displaystyle &=& -\frac{1}{2}\int_{\bbr^{4d}} r(x,y) |\vxi - \vxi_*|^2 f(x,\vxi) f(y,\vxi_*) d\vxi_* d\vxi dy dx \cr
\displaystyle && +\frac{1}{2}\int_{\bbr^{2d}} r(x,y) \Big(|u(x)-u(y)|^2\Big) \rho(x) \rho(y) dy dx \cr
\displaystyle &=&  -\int_{\bbr^{2d}} r(x,y) \Big(E(x)+E(y)-\frac{1}{2}|u(x)-u(y)|^2\Big) \rho(x) \rho(y) dy dx \cr
\displaystyle &=& -\int_{\bbr^{2d}} r(x,y) \Big(e(x)+e(y)+u(x)\cdot u(y)\Big) \rho(x) \rho(y) dy dx
\end{eqnarray*}
Finally (\ref{eq:fabciii}) follows by subtracting  (\ref{eq:fabcii}) from (\ref{eq:fabci}).
\end{proof}

Next, we present a fundamental estimate for the flocking behavior to the system \eqref{ap-2}. We set
\begin{equation} \label{ap-3}
\displaystyle {\Gamma}(t)  := \int_{\bbr^{2d}}\Big(\frac{1}{2}|u(x) - u(y)|^2 + e(x) + e(y) \Big) \rho(x) \rho(y) dy dx.
\end{equation}
The functional ${\Gamma}(t)$ can be expressed in terms of the moments $\mathcal{M}_i$ in (\ref{eqs:Ms}) (corresponding to the splitting of its integrand $\frac{1}{2}|u(x) - u(y)|^2 + e(x) + e(y) \equiv \rho(x)E(x)+ \rho(y)E(y) -u(x)\cdot u(y)$),
\[ 
\displaystyle {\Gamma}(t) \equiv 2 {\mathcal E}(t) \Iz - |{\mathcal M}_1|^2, \qquad 
\mathcal{E}(t)={\mathcal M}_1(t)=2\mathcal{E}(t). 
\]
Since ${\mathcal M}_i, i=0,1$ are constants, this reveals that ${\Gamma}(t)$ is essentially the total energy. We arrive at the main theorem of this section.

\begin{theorem}\label{thm:fundamental}
Assume  $(\rho, u, e)$ is a  smooth solution of the system \eqref{ap-2},  $(\rho, u, e)\in C^1([0, T) \times \bbr^{d})$. Then we have
\[
\displaystyle   {\Gamma}(t) \leq {\Gamma}(0) e^{-2 \Iz  \lambda \Phi(t)}, \qquad \Phi(t)=\int_0^t\varphi(s)ds, \ \ \varphi(s):=\inf_{(x_0,y_0)} \, r\big(x(s),y(s)\big)ds. 
\]
Here, $\Iz$ is the initial total mass, $\Iz= \|\rho_0\|_{L^1}$ and the infimum is taken over all particle  trajectories, $(x_0,y_0)\mapsto (x(s),y(s))$.
\end{theorem}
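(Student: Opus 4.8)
The plan is to reduce the assertion to the single total-energy balance \eqref{eq:fabci} of Lemma \ref{lem:fabc}, combined with the algebraic identity $\Gamma(t)=2\mathcal{E}(t)\Iz-|\mathcal{M}_1|^2$ recorded just above the theorem. Since the total mass $\Iz$ and the total momentum $\mathcal{M}_1$ are both conserved in time, differentiating this identity collapses the problem to a single scalar ODE inequality: I would write $\frac{d}{dt}\Gamma(t)=2\Iz\,\frac{d}{dt}\mathcal{E}(t)$, so that it suffices to control $\frac{d}{dt}\mathcal{E}(t)$.

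First I would substitute \eqref{eq:fabci} to obtain the exact expression
\[
\frac{d}{dt}\Gamma(t)=-2\Iz\lambda\int_{\bbr^{2d}}r(x,y)\Big[\frac{1}{2}|u(x)-u(y)|^2+e(x)+e(y)\Big]\rho(x)\rho(y)\,dy\,dx.
\]
The decisive structural observation is that the bracketed factor is precisely the integrand defining $\Gamma(t)$ in \eqref{ap-3}, and it is pointwise non-negative, since the internal energy $e(x)=\frac{1}{2\rho}\int|v-u(x)|^2 f\,dv$ is non-negative and so is $|u(x)-u(y)|^2$. Bounding the kernel from below by $r(x,y)\geq\varphi(t)$ on the support of $\rho(\cdot,t)\otimes\rho(\cdot,t)$ and pulling this positive constant out of the (non-negative) integral then yields the differential inequality
\[
\frac{d}{dt}\Gamma(t)\leq-2\Iz\lambda\,\varphi(t)\,\Gamma(t),
\]
after which Gronwall's lemma gives $\Gamma(t)\leq\Gamma(0)e^{-2\Iz\lambda\Phi(t)}$, which is the claim.

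The only genuinely delicate point, and the main obstacle, is justifying the uniform lower bound $r(x,y)\geq\varphi(t)$ over the support. Because $r$ is a decreasing function of distance, this is equivalent to an upper bound on the diameter of the transported spatial support of $\rho$. For a $C^1$ solution the mass is carried along the characteristics of \eqref{ap-2}, so any pair $(x,y)$ with $\rho(x)\rho(y)\neq0$ is the image of an initial pair $(x_0,y_0)$ under the flow, giving $r(x,y)=r\big(x(t;x_0),y(t;y_0)\big)\geq\inf_{(x_0,y_0)}r\big(x(t),y(t)\big)=\varphi(t)$. Thus $\varphi(t)$ is defined exactly to accommodate this step, and once one records that the compactly supported initial data propagate with finite characteristic speed so that the support stays bounded, the lower bound is immediate and the estimate closes.
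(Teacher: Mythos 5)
Your proposal is correct and follows essentially the same route as the paper: both differentiate the identity $\Gamma(t)=2\mathcal{E}(t)\Iz-|\mathcal{M}_1|^2$ using conservation of mass and momentum, substitute the total-energy balance \eqref{eq:fabci} to recognize $-2\Iz\lambda$ times an $r$-weighted copy of the $\Gamma$-integrand, bound $r(x,y)\geq\varphi(t)$ from below to obtain $\frac{d}{dt}\Gamma(t)\leq-2\Iz\lambda\varphi(t)\Gamma(t)$, and conclude by Gronwall. Your extra discussion of why $\varphi(t)$ bounds the kernel on the support (via characteristics) is a justification the paper leaves implicit, but it is the same argument.
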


\begin{proof} We use (\ref{eq:fabci}) and the relation ${\Gamma}(t) =2{\mathcal E}(t) \Iz - 
|{\mathcal M}_1|^2$ to find
\begin{eqnarray}
\displaystyle  \frac{d}{dt} {\Gamma}(t) &=& -   \lambda  \Iz \int_{\bbr^{2d}} r(x,y) \Big( |u(x) - u(y)|^2 + 2 e(x) + 2e(y) \Big)
                                           \rho(x) \rho(y) dy dx \label{eq:ed}\\
                                       &\leq& -   \lambda \Iz\varphi(t) \int_{\bbr^{2d}} \Big( |u(x) - u(y)|^2 + 2 e(x) + 2e(y) \Big)
                                           \rho(x) \rho(y) dy dx \nonumber\\
                                       &=& -2   \lambda \Iz \varphi(t) \Gamma(t). \nonumber
\end{eqnarray}
Gronwall's inequality then yields the desired result.
\end{proof}

We  conclude that whenever
the interparticle interaction, $\varphi(s)=\inf r(x(s),y(s))$ decays \emph{slowly} enough so that its primitive, $\Phi(t)$, diverges, then flocking occurs, $|u(x,t)-u(y,t)| \rightarrow 0$, in agreement with the flocking behavior of the C-S particle model, 
consult remark \ref{rem:2-1}.  
It is remarkable that the emergence of flocking is deduced here \emph{independently} of the constitutive relation for $P$.
In this context we observe that  energy dissipation,
driven by the negative source term $\mathcal{S}^{(2)}$ in (\ref{eq:Energy}) vanishes as $t\rightarrow \infty$. Indeed, theorem \ref{thm:fundamental} tells us that by (\ref{eq:5-1b}),
\[
 |\mathcal{S}^{(2)}| \leq A \Gamma(t) \rightarrow 0.
\] 

\section{Epilogue: flocking dissipation and entropy}
We have seen that the self-propelled flocking dynamics is driven by energy dissipation. 
The dissipation mechanism reveals itself through energy decay in the particle description 
(\ref{eq:2.4ca}),  in the kinetic description (\ref{eq:moremomb}) and equivalently, in the hydrodynamic description (\ref{eq:ed}). Observe that,
\begin{equation}\label{eq:6}
\Gamma(t) = {\mathcal{M}_2}{\mathcal {M}_0}  -|\mathcal{M}_1|^2 \geq 0.
\end{equation}
The right of (\ref{eq:6}) is the usual Cauchy-Schwartz inequality
\[
\Big|\int_{\bbr^{2d}} vf(x,v)dvdx\Big|^2 \leq \int_{\bbr^{2d}} |v|^2f(x,v)dvdx \times \int_{\bbr^{2d}}f(x,v)dvdx.
\] 
Thus, by theorem \ref{thm:fundamental}, $\Phi(t)\rightarrow \infty$ implies time asymptotic flocking by letting $\Gamma(t) \rightarrow 0$ which in turn, enforces an approximate Cauchy-Schwartz \emph{equality}. It then follows that $v$ approaches the bulk velocity, $v \rightarrow u_c$ as $t \rightarrow \infty$.
We refer to this mechanism as flocking dissipation. It is intimately related to the entropy \emph{increase} in the kinetic model (\ref{D-1}). To this end we compute
\begin{eqnarray*} 
\frac{d}{dt}\int_{\bbr^{2d}} f\log(f) dx dv &=& -\Nvareps \int_{\bbr^{4d}}r(x,y)\nabla_v \log(f)\cdot(v-\vst)ff_* d\vst dvdydx \\
&=& -\Nvareps \int_{\bbr^{4d}}r(x,y)\nabla_v f\cdot(v-\vst)f_* d\vst dvdydx \\
&=&  \Nvareps \int_{\bbr^{4d}}r(x,y)ff_* d\vst dvdydx =\Nvareps \int_{\bbr^{4d}}r(x,y)\rho(x)\rho(y) dydx.
\end{eqnarray*}
This is a reversed $H$-theorem. Entropy increases due to the ``improbable" statistical behavior of particles with increasingly highly correlated velocities, as they flock towards particle-independent bulk velocity.

\end{document}